\newtheorem{theorem}{Theorem}
\newtheorem{lemma}[theorem]{Lemma}
\newtheorem{cor}[theorem]{Corollary}
\newtheorem*{ex}{Example}
\begin{document}
\title{3-manifolds with abelian embeddings in $S^4$}
 
\author{J.A.Hillman}
\address{School of Mathematics and Statistics\\
     University of Sydney, NSW 2006\\
      Australia }

\email{jonathan.hillman@sydney.edu.au}

\begin{abstract}
We consider embeddings of 3-manifolds in $S^4$ such that each of the two
complementary regions has an abelian fundamental group.
In particular, we show that an homology handle $M$
has such an embedding if and only if $\pi_1(M)'$ is perfect,
and that the embedding is then essentially unique.
\end{abstract}

\keywords{abelian embedding, bipartedly trivial link,
homology handle, 3-manifold,surgery}

\subjclass{57N13}

\maketitle

Every integral homology 3-sphere embeds as a
topologically locally flat hypersurface in $S^4$, 
and has an essentially unique ``simplest" such embedding,
with contractible complementary regions.
For other 3-manifolds which embed in $S^4$, 
the complementary regions cannot both be simply-connected,
and it not clear whether they always have canonical ``simplest" embeddings.
If $M$ is a closed hypersurface in $S^4=X\cup_MY$ then
$H_1(M;\mathbb{Z})\cong{H_1(X;\mathbb{Z})}\oplus{H_1(Y;\mathbb{Z})}$,
and so embeddings such that each of the complementary regions 
$X$ and $Y$ has an abelian fundamental group
might be considered simplest.
We shall say that such an embedding is abelian.
Although most 3-manifolds that embed in $S^4$ do not have such embeddings, 
this class is of particular interest as the possible groups are known,
and topological surgery in dimension 4 is available for abelian fundamental groups.

Homology 3-spheres have essentially unique abelian embeddings
(although they may have other embeddings).
This is also known for $S^2\times{S^1}$ and $S^3/Q(8)$,
by results of Aitchison (published in \cite{Ru}) and Lawson \cite{La},
respectively.
In Theorems \ref{neutral} and \ref{homhandle} below we show that 
if $M$ is an orientable homology handle
(i.e., such that $H_1(M;\mathbb{Z})\cong\mathbb{Z}$)
then it has an abelian embedding if and only if $\pi_1(M)$ 
has perfect commutator subgroup, 
and then the abelian embedding is essentially unique.
(There are homology handles which do not embed in $S^4$ at all!)
The 3-manifolds obtained by 0-framed surgery 
on 2-component links with unknotted components
always have abelian embeddings, 
and the complementary regions for such embeddings 
are homotopy equivalent to standard 2-complexes.
These shall be our main source of examples.
In particular,
we shall give an example in which $X\simeq{Y}\simeq{S^1\vee{S^2}}$,
but the pairs  $(X,M)$ and $(Y,M)$ are not homotopy equivalent.
We do not yet have examples of a 3-manifold with several 
inequivalent abelian embeddings.

The first two sections fix our notation, recall some earlier work,
and give some results on homotopy equivalences. 
In \S3 we define the notions of abelian and nilpotent embeddings.
Sections 4--6 consider abelian embeddings of 3-manifolds $M$
with torsion free homology (i.e., $H_1(M;\mathbb{Z})\cong\mathbb{Z}^\beta$, 
where $\beta\leq4$ or $\beta=6$).
In \S7 we consider briefly some embeddings of rational homology spheres.
In particular, we shall show that ten 3-manifolds 
with elementary amenable fundamental group have abelian embeddings.
The question remains open for one further such 3-manifold.
In \S8 we give some simple observations on the possible homotopy types 
of the complementary regions for the final class of abelian embeddings,
for which  
$\pi_1(X)\cong\pi_1(Y)\cong\mathbb{Z}\oplus(\mathbb{Z}/k\mathbb{Z})$,
for some $k>1$.
While such examples do exist,
much less is known in this case.

All 3-manifolds considered here shall be closed, connected and orientable.
An embedding $j$ is {\it smoothable\/} if it is smooth with respect
to some smooth structure on $S^4$, 
equivalently, if each  complementary region is a handlebody.
Although the embeddings that we shall construct are 
usually smooth embeddings in the standard 4-sphere,
we wish to apply surgery arguments,
 and so ``embedding" shall mean ``topologically locally flat
embedding", unless otherwise qualified.
Embeddings $j$ and $\tilde{j}$ are {\it equivalent\/}
if there are self-homeomorphisms $\phi$ of $M$ and $\psi$ of $S^4$ such
that $\psi{j}=\tilde{j}\phi$.
If all abelian embeddings are equivalent to $j$,
we shall say that $j$ is {\it essentially unique}.

\section{notation and background}

Let $j:M\to{S^4}$ be an embedding of a closed connected 3-manifold,
and let $X$ and $Y$ be the closures of the components of $S^4\setminus{M}$.
The Mayer-Vietoris sequence for $S^4=X\cup_MY$ and Poincar\'e-Lefshetz
duality give isomorphisms
$H_i(M;\mathbb{Z})\cong{H_i(X;\mathbb{Z})}\oplus{H_i(Y;\mathbb{Z})}$ 
for $i=1$ and 2, $H_2(X;\mathbb{Z})\cong{H^1(Y;\mathbb{Z})}$ and  
$H_2(Y;\mathbb{Z})\cong{H^1(X;\mathbb{Z})}$,
while $H_i(X;\mathbb{Z})=H_i(Y;\mathbb{Z})=0$ for $i>2$.
Since $\chi(X)+\chi(Y)=\chi(S^4)+\chi(M)=2$,
we may assume that $\chi(X)\leq1\leq\chi(Y)$.
Let $\beta=\beta_1(M;\mathbb{Z})$, $\pi=\pi_1(M)$,
$\pi_X=\pi_1(X)$ and $\pi_Y=\pi_1(Y)$,
and let $j_X$ and $j_Y$ be the inclusions of $M$ into $X$ and $Y$, 
respectively.

Our commutator convention is that if $G$ is a group and $g,h\in{G}$ then $[g,h]=ghg^{-1}h^{-1}$.
The commutator subgroup is $G'=[G,G]$, and the second derived group is $G''=[G',G']$.
The lower central series is defined by $G_{[1]}=G$ 
and $G_{[n+1]}=[G,G_n]$ for all $n\geq1$.
Let $F(r)$ be the free group of rank $r$.

If $V$ is a cell-complex we shall write 
$C_*(\widetilde{V})=C_*(V;\mathbb{Z}[\pi_1(V)])$
for the cellular chain complex of the universal cover $\widetilde{V}$ 
with its natural structure as a $\mathbb{Z}[\pi_1(V)]$-module
(and similarly for pairs of spaces).

Our examples may all be constructed using bipartedly slice links.
Let $M(L)$ be the closed 3-manifold obtained by 0-framed surgery 
on the link $L$. 
We say that $L$ is {\it bipartedly slice\/} (respectively, {\it trivial} or {\it ribbon}) 
if it has a partition $L=L_+\cup{L_-}$ into two sublinks 
which are each slice links (respectively, trivial or ribbon links).
The partition then determines an embedding $j_L:M\to{S^4}$,
given by ambient surgery on an equatorial $S^3$ in $S^4=D_+\cup{D_-}$.
We add 2-handles to these 4-balls along $L_+$ on one side 
and along $L_-$ on the other.
If $L_+$ and $L_-$ are smoothly slice then $j_L$ is smooth, 
and if they are trivial each complementary region may be
obtained by adding 1- and 2-handles to the 4-ball.
(The notation $j_L$  is ambiguous, 
for if $L$ has more than one component it may have 
several different partitions leading to distinct embeddings.
Moreover we must choose a set of slice discs for each of $L_+$ and $L_-$.)
If each  complementary region for an embedding $j$
may be obtained from the 4-ball by adding 1- and 2-handles,
must $j=j_L$ for some 0-framed link $L$?

In \cite{Hi17} we said that an embedding $j$ is {\it minimal\/} if the
induced homomorphism $j_\Delta:\pi\to\pi_1(X)\times\pi_1(Y)$ is an epimorphism.
In fact this is equivalent to each of $j_X$ and $j_Y$ inducing an epimorphism.

\begin{lemma} 
\label{minimal}
The homomorphisms $j_{X*}=\pi_1(j_X)$ and $j_{Y*}=\pi_1(j_Y)$ 
are both epimorphisms if and only if 
$j_\Delta=(j_{X*},j_{Y*})$ is an epimorphism.
\end{lemma}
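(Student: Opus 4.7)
The plan is to dispatch each direction separately. The ``only if'' direction is immediate: composing $j_\Delta$ with the projections $\pi_X\times\pi_Y\to\pi_X$ and $\pi_X\times\pi_Y\to\pi_Y$ recovers $j_{X*}$ and $j_{Y*}$, each of which is therefore surjective whenever $j_\Delta$ is.

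For the converse, suppose both $j_{X*}$ and $j_{Y*}$ are surjective, and set $K_X=\ker j_{X*}$, $K_Y=\ker j_{Y*}$ (both normal in $\pi$). The crux is to show $\pi=K_XK_Y$. Van Kampen's theorem, applied to $S^4=X\cup_MY$, yields $\pi_X*_\pi\pi_Y\cong\pi_1(S^4)=1$. Since $K_X$ and $K_Y$ are both contained in the normal subgroup $K_XK_Y$, the quotient map $\pi\to\pi/(K_XK_Y)$ factors through each of $\pi_X=\pi/K_X$ and $\pi_Y=\pi/K_Y$, and these two factorings agree on $\pi$. By the universal property of the amalgamated free product, this data descends to a homomorphism $\pi_X*_\pi\pi_Y\to\pi/(K_XK_Y)$, which is surjective because the factor $\pi_X$ already surjects onto the quotient. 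Since the source is trivial, so is the target, giving $\pi=K_XK_Y$.

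From this equality, $\pi_X=j_{X*}(\pi)=j_{X*}(K_XK_Y)=j_{X*}(K_Y)$, and symmetrically $\pi_Y=j_{Y*}(K_X)$. Given an arbitrary $(x,y)\in\pi_X\times\pi_Y$, choose $k_Y\in K_Y$ with $j_{X*}(k_Y)=x$ and $k_X\in K_X$ with $j_{Y*}(k_X)=y$; then $g=k_Xk_Y$ satisfies $j_{X*}(g)=x$ and $j_{Y*}(g)=y$, so $j_\Delta(g)=(x,y)$, completing the proof.

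The only step requiring genuine care is the pushout argument in the middle paragraph, where the content $\pi_1(S^4)=1$ is repackaged as the triviality of $\pi/(K_XK_Y)$; everything else is routine manipulation of normal subgroups.
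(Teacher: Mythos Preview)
Your proof is correct and follows essentially the same route as the paper's: both introduce $K_X=\ker j_{X*}$ and $K_Y=\ker j_{Y*}$, use $\pi_1(X\cup_MY)=1$ to force $\pi/K_XK_Y$ to be trivial, and then observe that $j_\Delta(k_Xk_Y)=(j_{X*}(k_Y),j_{Y*}(k_X))$ hits an arbitrary element of $\pi_X\times\pi_Y$. You have simply spelled out the van~Kampen/pushout step more explicitly than the paper, which compresses it into the single clause ``Since $\pi_1(X\cup_MY)=1$, these quotients must all be trivial.'' One small slip: your labels ``only if'' and ``converse'' are swapped relative to the statement as written (the easy direction, $j_\Delta$ onto $\Rightarrow$ components onto, is the \emph{if} direction), though the mathematics is unaffected.
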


\begin{proof}
Let $K_X=\mathrm{Ker}(j_{X*})$ and $K_Y=\mathrm{Ker}(j_{Y*})$.
If $j_{X*}$ and $j_{Y*}$ are epimorphisms then they induce isomorphisms
$\pi/K_X\to\pi_X$ and $\pi/K_Y\to\pi_Y$.
Hence $\pi/K_XK_Y\cong\pi_X/j_{X*}(K_Y)$ and $\pi/K_XK_Y\cong\pi_Y/j_{Y*}(K_X)$.
Since  $\pi_1(X\cup_MY)=1$, these quotients must all be trivial.
If $g\in{K_X}$ and $h\in{K_Y}$ then $j_\Delta(gh)=(j_{X*}(h),j_{Y*}(g))$.
Hence $j_\Delta$ is an epimorphism.

Conversely, if $j_\Delta$ is an epimorphism then so are 
its components  $j_{X*}$ and $j_{Y*}$.
\end{proof}

The term ``minimal" is unsatisfactory for several reasons, and we shall henceforth say that
an embedding satisfying the equivalent conditions of Lemma \ref{minimal} is {\it bi-epic}.
Embeddings obtained from other embeddings by nontrivial ``2-knot surgery" \cite{Hi17}
are never bi-epic.
However, if $j=j_L$ for some bipartedly ribbon link $L$ then $j$ is bi-epic, since $\pi$, 
$\pi_X$ and $\pi_Y$ are generated by images of the meridians of $L$.

\begin{ex}
There are $3$-manifolds with more than one bi-epic embedding.
\end{ex}

The link $L$ obtained from the Borromean rings by replacing 
one component by its $(2,1)$-cable and another by its $(3,1)$-cable 
may be partitioned as the union of two trivial links in three ways. 
The resulting three embeddings of $M(L)$ in $S^4$ each have $Y\simeq{S^1}\vee2S^2$,
but the groups $\pi_X$ have presentations $\langle{a,b}|[a,b^2]^3\rangle$,
$\langle{a,c}|[a,c^3]^2\rangle$, and $\langle{b,c}|[b^2,c^3]\rangle$, 
respectively, and so are distinct.
In the first two cases $\pi$ has torsion, 
while in the third case $X$ is aspherical.
(None of these groups is  abelian.)
This example can obviously be generalized in various ways.
The homology sphere in  \cite[Figure 3]{Hi17} is another example;
the embedding determined by the link is bi-epic, but the 3-manifold also has
an embedding with both complementary regions contractible.
However the latter embedding may not derive from a
0-framed link representing the homology sphere.

The cases when $j_\Delta$ is an isomorphism are quite rare.

\begin{lemma}
If $j_\Delta$ is an isomorphism then either $M\cong{F}\times{S^1}$ for some 
aspherical closed orientable surface $F$ or $M\cong\#^r(S^2\times{S^1})$ for some $r\geq0$.
\end{lemma}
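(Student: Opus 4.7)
The plan is to combine Grushko's theorem with a case analysis based on the product structure $\pi\cong\pi_X\times\pi_Y$ and the topology of the complementary regions $X$ and $Y$.

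First I would invoke Grushko. Writing $M=M_1\#\cdots\#M_k$ for a prime decomposition, $\pi$ splits as a free product $*_i\pi_1(M_i)$. A direct product of two nontrivial groups contains commuting elements from distinct conjugacy classes and is therefore freely indecomposable, so either $M$ is prime, or one of $\pi_X,\pi_Y$ is trivial. These two possibilities correspond to the two families in the conclusion.

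Assume next that $\pi_Y=1$ (the case $\pi_X=1$ being symmetric). Then $Y$ is a compact simply-connected 4-manifold with $\partial Y=M$, and the homology formulas of \S1 give $H_1(Y)=0$, $H_2(Y)\cong\mathrm{Hom}(H_1(M),\mathbb{Z})$, and $H_2(X)\cong H^1(Y)=0$. Running the long exact sequence of $(Y,M)$ against the Poincar\'e--Lefschetz duality $H_2(Y,M)\cong H^2(Y)$ forces $H_1(M)$ to be torsion-free and the intersection form on $H_2(Y)$ to vanish, so $H_2(Y)\cong\mathbb{Z}^r$ with $r=\beta_1(M)$, and then Whitehead's theorem yields $Y\simeq\vee^rS^2$. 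Freedman's classification of simply-connected compact topological 4-manifolds with prescribed boundary should upgrade this to $Y\cong\natural^r(S^2\times D^2)$, whence $M=\partial Y\cong\#^r(S^2\times S^1)$.

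If both $\pi_X$ and $\pi_Y$ are nontrivial then $M$ is prime by the Grushko step. I would exclude $M=S^2\times S^1$ (since $\mathbb{Z}$ has no nontrivial direct product decomposition) and the spherical space form case (via Milnor's condition, which forbids $(\mathbb{Z}/p)^2$ subgroups of $\pi$, together with the torsion-freeness analysis applied on both sides). What remains is $M$ aspherical with infinite fundamental group, so $\pi\cong\pi_X\times\pi_Y$ is a PD(3) group splitting nontrivially as a direct product. The K\"unneth formula applied to $H^*(G;\mathbb{Z}G)$ then forces one factor to be PD(1) (hence infinite cyclic) and the other to be PD(2) (hence a closed orientable surface group). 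Thus $M\simeq F\times S^1$ for an aspherical closed orientable surface $F$, and by Borel rigidity in dimension 3 (a consequence of geometrization) this homotopy equivalence is realized by a homeomorphism.

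The hardest step will be the upgrade $Y\simeq\vee^rS^2\Rightarrow Y\cong\natural^r(S^2\times D^2)$ in the first case, which depends on Freedman's topological disk-embedding theorem and his classification of simply-connected 4-manifolds with boundary. A subsidiary obstacle is the complete exclusion of the finite direct-product subcases in the prime case: Milnor's condition alone only forbids $(\mathbb{Z}/p)^2$ subgroups and so leaves room for products of groups of coprime order, and additional use of the $S^4$-embedding data (e.g.\ Euler characteristic and equivariant considerations on the universal covers of $X$ and $Y$) will be needed to rule these out.
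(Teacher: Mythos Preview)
The main gap is in the case $\pi_Y=1$. You show $Y\simeq\vee^rS^2$ with trivial intersection form and then invoke ``Freedman's classification of simply-connected compact topological 4-manifolds with prescribed boundary'' to conclude $Y\cong\natural^r(S^2\times D^2)$. But that classification takes the boundary as \emph{input}; here the boundary $M$ is exactly what you are trying to determine, so the step is circular. The homotopy type of a simply-connected 4-manifold does not determine its boundary: for any contractible $W$ with $\partial W=\Sigma$ a nontrivial homology sphere, $\natural^r(S^2\times D^2)\natural W$ is also simply connected with trivial intersection form and homotopy type $\vee^rS^2$, yet bounds $\#^r(S^2\times S^1)\#\Sigma$. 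The paper instead observes that when $\pi_Y=1$ the map $j_{X*}:\pi\to\pi_X$ is itself an isomorphism, i.e.\ the inclusion $M=\partial X\hookrightarrow X$ is a $\pi_1$-isomorphism, and cites Daverman's theorem that this forces $\pi$ to be free; hence $M\cong\#^r(S^2\times S^1)$ directly, with no appeal to the structure of $Y$.

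For the remaining cases your route is heavier than needed but broadly sound. When one factor is infinite and the other nontrivial, the paper simply quotes Epstein's 1961 theorem that a closed orientable 3-manifold whose fundamental group is a nontrivial direct product is $F\times S^1$; this subsumes your Grushko/asphericity/PD$_3$ argument (and note that ``commuting elements from distinct conjugacy classes'' does not by itself imply free indecomposability). For the both-finite case the embedding data give more than you saw: Poincar\'e--Lefschetz duality on $(X,M)$ yields $H_1(Y)\cong H_2(X,M)\cong H^2(X)\cong\mathrm{Ext}(H_1(X),\mathbb{Z})\cong H_1(X)$, so $\pi_X^{\mathrm{ab}}\cong\pi_Y^{\mathrm{ab}}$ and the common prime is automatic whenever the abelianisations are nontrivial; the residual perfect case $\pi_X\cong\pi_Y\cong I^*$ is excluded because the centres still produce a $(\mathbb{Z}/2\mathbb{Z})^2$ in $\pi$.
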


\begin{proof}
If $\pi\cong\pi_X\times\pi_Y$ with $\pi_X$ infinite and $\pi_Y\not=1$ then
$M\cong{F}\times{S^1}$ for some aspherical closed orientable surface $F$ \cite{Ep}.
If $\pi_Y=1$ then $j_{X*}$ is an isomorphism, and so $\pi$ must be a free group \cite{Da}.
Hence $M\cong\#^r(S^2\times{S^1})$ for some $r\geq1$.
Finally, if $\pi_X$ and $\pi_Y$ are both finite and have nontrivial abelianization
then their orders have a common prime factor $p$, and so $\pi$ has
$(\mathbb{Z}/p\mathbb{Z})^2$ as a subgroup, which is not possible.
We may also exclude $\pi_X\cong\pi_Y\cong{I^*}$, for a similar reason,
and so there remains only the case $\pi=1$, when $M=S^3=\#^0(S^2\times{S^1})$.
\end{proof}

These 3-manifolds do in fact have bi-epic embeddings with $j_\Delta$ an isomorphism.

\section{homotopy equivalences}

In this section we shall give some lemmas on recognizing 
the homotopy types of certain spaces and pairs of spaces arising later.
One simple but important observation is that the natural homomorphisms
$H_2(X;\mathbb{Z})\to{H_2(X,M;\mathbb{Z})}$ is 0, since it factors through 
$H_2(S^4;\mathbb{Z})\to{H_2(S_4,Y;\mathbb{Z})}$,
and similarly for $H_2(Y;\mathbb{Z})\to{H_2(Y,M;\mathbb{Z})}$.
Equivalently, the intersection pairings are trivial on $H_2(X;\mathbb{Z})$ and $H_2(Y;\mathbb{Z})$.
(See Theorem \ref{homhandle} below for one use of this observation.)

\begin{theorem}
\label{2con}
Let $U$ and $V$ be connected finite cell complexes such that $c.d.U\leq2$ and $c.d.V\leq2$.
If $f:U\to{V}$ is a $2$-connected map then $\chi(U)\geq\chi(V)$,
with equality if and only if $f$ is a homotopy equivalence.
\end{theorem}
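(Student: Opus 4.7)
The plan is to replace $f$ by a cofibration and measure the ``failure of homotopy equivalence'' by a single homology group. First I would replace $f$ by the inclusion $j\colon U\hookrightarrow M_f$ of $U$ into the mapping cylinder; since $M_f\simeq V$, additivity of Euler characteristic along the cofibration gives
$$\chi(V)-\chi(U)=\chi(M_f,U).$$
The two hypotheses combine to concentrate the entire relative homology in degree $3$. The condition $c.d.U,c.d.V\leq 2$ lets me arrange $U$ and $V$ to be $2$-dimensional, so the relative cellular chain complex $C_*(M_f,U;\mathbb{Z})$ is supported in degrees $0\leq i\leq 3$; and the $2$-connectedness of $f$ gives $H_i(M_f,U;\mathbb{Z})=0$ for $i\leq 2$ (either by relative Hurewicz or by the Whitehead connectivity theorem applied to $f$). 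The only possibly nonzero relative homology group is therefore $H_3(M_f,U;\mathbb{Z})$, which, as a subgroup of the free abelian $C_3(M_f,U;\mathbb{Z})$, is itself free abelian. Hence
$$\chi(V)-\chi(U)=-\operatorname{rk} H_3(M_f,U;\mathbb{Z})\leq 0,$$
with equality precisely when $H_3(M_f,U;\mathbb{Z})=0$.

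The nontrivial content is the equality case. Since $\pi_1(f)$ is an isomorphism (from $\pi_1(M_f,U)=0$), Whitehead's theorem reduces showing that $f$ is a homotopy equivalence to showing that the lift $\tilde f\colon\tilde U\to\tilde V$ is a $\mathbb{Z}$-homology equivalence; equivalently, the cellular chain complex $D_*=C_*(M_f,U;\mathbb{Z}[\pi])$ of finitely generated free $\mathbb{Z}[\pi]$-modules (with $\pi=\pi_1(U)$) should be acyclic. The same dimension and Hurewicz arguments applied to $\tilde f$ show $H_i(D_*)=0$ for $i\neq 3$. Splitting the resulting short exact sequences
$0\to H_3(D_*)\to D_3\to B_2\to 0$, $0\to B_2\to D_2\to B_1\to 0$, $0\to B_1\to D_1\to D_0\to 0$
(each splits because the successive quotients are inductively projective) identifies $H_3(D_*)$ as a finitely generated stably free $\mathbb{Z}[\pi]$-module whose class in $K_0(\mathbb{Z}[\pi])$ equals $(\chi(U)-\chi(V))[\mathbb{Z}[\pi]]=0$.

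The main obstacle will be this last step: concluding from ``$H_3(D_*)$ is stably free of rank $0$'' together with $H_3(D_*)\otimes_{\mathbb{Z}[\pi]}\mathbb{Z}=H_3(M_f,U;\mathbb{Z})=0$ that $H_3(D_*)$ actually vanishes. This requires cancellation for finitely generated free $\mathbb{Z}[\pi]$-modules, which for the abelian groups $\pi$ arising later in the paper (free abelian of small rank, or $\mathbb{Z}\oplus\mathbb{Z}/k$) holds by standard means. Once $H_3(D_*)=0$, Whitehead's theorem completes the proof.
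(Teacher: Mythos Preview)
Your approach is essentially the paper's: concentrate the relative equivariant homology in a single degree, identify $H_3$ as a stably free $\mathbb{Z}[\pi]$-module of rank $\chi(U)-\chi(V)$ via Schanuel's Lemma, and read off the inequality. Two points need attention.

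First, a minor one: you write that $c.d.U,c.d.V\leq 2$ ``lets me arrange $U$ and $V$ to be $2$-dimensional.'' That is Wall's $D(2)$ problem, which is open in general. What is true, and all that is needed, is that such finite complexes are homotopy equivalent to $3$-complexes, or---working purely at the chain level---that $C_*(\widetilde U)$ and $C_*(\widetilde V)$ are chain homotopy equivalent to finite free complexes of length $\leq 2$, so the mapping cone is concentrated in degrees $\leq 3$. The paper phrases this by assuming $V$ has dimension $\leq 3$.

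Second, and more substantively: in the equality case you correctly reach ``$H_3(D_*)$ is stably free of rank $0$,'' but then retreat to abelian $\pi$, citing cancellation only for the specific groups that occur later. The paper closes this gap in full generality by invoking Kaplansky's theorem that group rings $\mathbb{Z}[\pi]$ are \emph{weakly finite} for every group $\pi$: if $M\oplus\mathbb{Z}[\pi]^n\cong\mathbb{Z}[\pi]^n$ then $M=0$. (The paper points to \cite{Ro} for a proof.) With this in hand, your auxiliary condition $H_3(D_*)\otimes_{\mathbb{Z}[\pi]}\mathbb{Z}=0$ is unnecessary, and the theorem holds as stated, with no restriction on $\pi_1$. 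Your preliminary pass through integral coefficients is then also superfluous: the whole argument can be run once, equivariantly.
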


\begin{proof}
Up to homotopy, we may assume that $f$ is a cellular inclusion,
and that $V$ has dimension $\leq3$.
Let $\pi=\pi_1(U)$ and let $C_*=C_*(\widetilde{V},\widetilde{U})$. 
Then $H_q(C_*)=0$ if $q\leq2$, since $f$ is 2-connected,
and $H_q(C_*)=0$ if $q>3$, since $c.d.U$ and $c.d.V\leq2$.
Hence $H_3(C_*)\oplus{C_2}\oplus{C_0}\cong{C_3}\oplus{C_1}$,
by Schanuel's Lemma,
and so $H_3(C_*)$ is a stably free $\mathbb{Z}[\pi]$-module 
of rank $-\chi(C_*)=\chi(U)-\chi(V)$.
Hence $\chi(U)\geq\chi(V)$, with equality if and only if
$H_3(C_*)=0$, since group rings are weakly finite,
by a theorem of Kaplansky.
(See \cite{Ro} for a proof.)
The result follows from the long exact sequence of the pair
$(\widetilde{Y},\widetilde{X})$ and the theorems of Hurewicz and Whitehead.
\end{proof}

If $c.d.X\leq2$ then $C_*(\widetilde{X})$ is chain homotopy equivalent 
to a finite projective complex of length 2,
which is a partial resolution of the augmentation module $\mathbb{Z}$.
Chain homotopy classes of such partial resolutions 
are classified by $Ext^3_{\mathbb{Z}[\pi]}(\mathbb{Z},\Pi)=H^3(\pi;\Pi)$,
where $\Pi$ is the module of 2-cycles.

\begin{cor}
\label{2concor}
If $U$ is a connected finite complex such that $c.d.U\leq2$ and $\pi_1(U)\cong\mathbb{Z}$
then $U\simeq{S^1}\vee\bigvee^{\chi(U)}S^2$.
\end{cor}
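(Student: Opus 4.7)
The plan is to construct a $2$-connected map $f : V \to U$ with $V = S^1 \vee \bigvee^{\chi(U)} S^2$ and to invoke Theorem \ref{2con}; since by construction $\chi(V) = \chi(U)$, the equality clause there upgrades $2$-connectedness to a homotopy equivalence.

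The heart of the argument is to establish that $\pi_2(U) = H_2(\widetilde{U})$ is a \emph{free} $\Lambda$-module of rank $\chi(U)$, where $\Lambda := \mathbb{Z}[\pi_1(U)] = \mathbb{Z}[t,t^{-1}]$. Replace $U$ by a cellular model with a single $0$-cell; then $C_*(\widetilde{U})$ is a finite free $\Lambda$-complex in degrees $0,1,2$ with $H_0 = \mathbb{Z}$, $H_1 = 0$, and $H_2 = \pi_2(U)$. Comparing the resulting exact sequence
$$0 \to \pi_2(U) \to C_2 \to C_1 \to C_0 \to \mathbb{Z} \to 0$$
with the standard free resolution $0 \to \Lambda \xrightarrow{t-1} \Lambda \to \mathbb{Z} \to 0$ of the augmentation module via Schanuel's Lemma shows $\pi_2(U)$ is stably free, and an Euler characteristic count then gives $\mathrm{rk}_\Lambda \pi_2(U) = \chi(U)$. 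Freeness itself follows from the classical fact that every stably free $\mathbb{Z}[\mathbb{Z}]$-module is free (Bass).

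With $\pi_2(U) \cong \Lambda^{\chi(U)}$ in hand I would build $f$ as follows: let $f_1 : S^1 \to U$ represent a generator of $\pi_1(U)$, choose a $\Lambda$-basis $\alpha_1,\ldots,\alpha_{\chi(U)}$ of $\pi_2(U)$, realize each $\alpha_i$ by a map $S^2 \to U$, and wedge these with $f_1$ to obtain $f : V \to U$. Then $\pi_1(f)$ is an isomorphism and $\pi_2(f)$ is surjective (it sends the free $\Lambda$-generators of $\pi_2(V) \cong \Lambda^{\chi(U)}$ to a basis of $\pi_2(U)$), so $f$ is $2$-connected, and Theorem \ref{2con} completes the proof. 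The main obstacle is the last sentence of the previous paragraph: the promotion from stably free to free for $\pi_2(U)$. Without Bass's theorem one would only be able to conclude $U \vee \bigvee^{k} S^2 \simeq S^1 \vee \bigvee^{\chi(U)+k} S^2$ for some $k \geq 0$, short of the clean statement of the corollary.
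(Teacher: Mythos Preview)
Your argument is correct and follows essentially the same route as the paper: establish that $\pi_2(U)$ is a free $\Lambda$-module of rank $\chi(U)$, build the obvious map from $S^1\vee\bigvee^{\chi(U)}S^2$ using a generator of $\pi_1$ and a $\Lambda$-basis of $\pi_2$, and invoke Theorem~\ref{2con}. One small imprecision: the hypothesis $c.d.U\leq2$ does not by itself let you ``replace $U$ by a cellular model'' with cells only in degrees $0,1,2$; what it gives is that $C_*(\widetilde U)$ is \emph{chain homotopy equivalent} to a finite free $\Lambda$-complex $P_*$ of length $\leq2$ (using that projective $\Lambda$-modules are free), and that is all your Schanuel argument actually needs---this is how the paper phrases it.
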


\begin{proof}
Since $c.d.U\leq2$ and projective $\mathbb{Z}[\pi_1(U)]$-modules are free,
$C_*(\widetilde{U})$ is chain homotopy equivalent to 
a finite free $\mathbb{Z}[\pi_1(U)]$-complex $P_*$ of length $\leq2$,
and $\chi(U)=\Sigma(-1)^irank(P_i)$.
Since $\pi_2(U)\cong{H_2(U;\mathbb{Z}[\pi_1(U)])}$
is the module of 2-cycles in  $C_*(\widetilde{U})$, it is free of rank $\chi(U)$.
Let $f:{S^1\vee\bigvee^{\chi(U)}S^2\to{U}}$ be the map determined by a
generator for $\pi_1(U)$ and representatives of a basis for $\pi_2(U)$.
Then $f$ is a homotopy equivalence, by the theorem.
\end{proof}

Theorem 3.2 of \cite{Hi} gives an analogue of Theorem \ref{2con} for maps between closed 4-manifolds.
The argument extends to the following relative version.

\begin{lemma}
\label{4man}
Let $f:(X_1,A_1)\to(X_2,A_2)$ be a map of orientable $PD_4$-pairs such that
$f|_{A_1}:A_1\to{A_2}$ is a homotopy equivalence. Then $f$ is a homotopy
equivalence of pairs if and only if $\pi_1(f)$ is an isomorphism and $\chi(X_1)=\chi(X_2)$.
\end{lemma}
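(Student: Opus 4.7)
The forward direction is immediate from the definitions. For the converse I plan to reduce to Theorem~3.2 of \cite{Hi}, the closed $PD_4$-complex version, via a doubling argument.

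First I would replace $f$ by a cellular inclusion using its mapping cylinder, and then exploit the homotopy equivalence $f|_{A_1}:A_1\to A_2$ to adjust $(X_2,A_2)$ within its pair homotopy type (by attaching the mapping cylinder of $f|_{A_1}$ to $X_2$ along $A_2$) so that $A_1=A_2=:A$, $f|_A=\mathrm{id}_A$, and $A\subset X_1\subset X_2$. Set $DX_i:=X_i\cup_AX_i$. Since $(X_i,A)$ is an orientable $PD_4$-pair, $DX_i$ is a closed orientable $PD_4$-complex, and $f$ together with $\mathrm{id}_A$ induces $Df:DX_1\to DX_2$. Inclusion-exclusion gives $\chi(DX_i)=2\chi(X_i)-\chi(A)$, so the hypothesis $\chi(X_1)=\chi(X_2)$ forces $\chi(DX_1)=\chi(DX_2)$; and van Kampen's theorem identifies $\pi_1(DX_i)\cong\pi_1(X_i)*_{\pi_1(A)}\pi_1(X_i)$, whence $\pi_1(Df)$ is an isomorphism because $\pi_1(f)$ is and $f|_A=\mathrm{id}$. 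Applying Theorem~3.2 of \cite{Hi} to $Df$ yields that $Df$ is a homotopy equivalence.

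To descend from $Df$ to $f$, let $\iota_i:X_i\hookrightarrow DX_i$ include one of the two copies and let $r_i:DX_i\to X_i$ be the fold retraction; these satisfy $r_i\iota_i=\mathrm{id}_{X_i}$, $Df\circ\iota_1=\iota_2\circ f$, and $r_2\circ Df=f\circ r_1$. Given a homotopy inverse $g'$ of $Df$, the composite $g:=r_1g'\iota_2:X_2\to X_1$ satisfies $fg\simeq r_2(Dfg')\iota_2\simeq r_2\iota_2=\mathrm{id}_{X_2}$ and $gf\simeq r_1(g'Df)\iota_1\simeq r_1\iota_1=\mathrm{id}_{X_1}$, so $f$ is a homotopy equivalence. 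Combined with the hypothesis that $f|_A$ is a homotopy equivalence and the fact that $A\hookrightarrow X_i$ is a cofibration (after CW approximation), standard arguments then upgrade $f$ to a homotopy equivalence of pairs.

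The main subtlety I expect is the initial reduction to $A_1=A_2$ with $f|_A=\mathrm{id}_A$, which needs some care with double mapping cylinders to carry the $PD_4$-pair structure along; once that is in place, the doubling computations of $\chi$ and $\pi_1$, and the descent from $Df$ to $f$ via the fold, are essentially formal.
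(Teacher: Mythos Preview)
Your approach is correct but takes a genuinely different route from the paper's. The paper's proof is two lines: since $f|_{A_1}$ is a homotopy equivalence, $f$ has degree~1 (when $A_i\neq\emptyset$ one has $H_4(X_i)=0$, so $\partial:H_4(X_i,A_i)\to H_3(A_i)$ is injective and $(f|_{A_1})_*[A_1]=[A_2]$ forces $f_*[X_1,A_1]=[X_2,A_2]$); hence $f$ is $2$-connected as a map $X_1\to X_2$, and then the Schanuel/duality computation behind Theorem~3.2 of \cite{Hi} runs verbatim with Poincar\'e--Lefschetz duality replacing Poincar\'e duality. So rather than doubling and quoting the closed theorem as a black box, the paper simply re-runs the closed-case argument in the relative setting. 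Your doubling-plus-fold strategy is a legitimate alternative: it trades the (short) relative duality computation for the reduction to $f|_A=\mathrm{id}_A$ and the retraction descent, and it has the virtue of invoking the closed result literally rather than by analogy.

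One point you should make explicit: Theorem~3.2 of \cite{Hi} needs $Df$ to be $2$-connected (equivalently, degree~$1$ together with the $\pi_1$-isomorphism), not merely a $\pi_1$-isomorphism with equal Euler characteristics---otherwise a constant map $S^4\to S^4$ would be a counterexample. You verify $\pi_1(Df)$ and $\chi$, but never the degree. This is easy to supply: once $f|_A=\mathrm{id}_A$, the argument above gives $\deg f=1$, and then Mayer--Vietoris (or just summing the two relative fundamental classes) shows $\deg(Df)=1$. With that sentence added, your proof goes through.
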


\begin{proof}
Since $f|_{A_1}:A_1\to{A_2}$ is a homotopy equivalence,
$f$ has degree 1, and hence is 2-connected as a map from $X_1$ to $X_2$.
The rest  of the argument is as in \cite[Theorem 2]{Hi}.
\end{proof} 

In certain cases we can identify the homotopy type of a pair.

\begin{lemma}
\label{asph}
Let $(X,A)$ and $(X',A')$ be pairs such that the inclusions 
${\iota_A:A\to{X}}$ and $\iota_{A'}:A'\to{X'}$ induce epimorphisms 
on fundamental groups.
If $X$ and $X'$ are aspherical and $f:A\to{A'}$ 
is a homotopy equivalence such that
$\pi_1(f)(\mathrm{Ker}(\pi_1(\iota_A)))=\mathrm{Ker}(\pi_1(\iota_{A'}))$
then $f$ extends to a homotopy equivalence of pairs $(X,A)\simeq(X',A')$.
\end{lemma}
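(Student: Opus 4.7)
\medskip

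\noindent\emph{Proof plan.} The plan is to promote $f$ to a compatible map of the ambient spaces using asphericity, and then invoke the standard fact that a map of cofibration pairs whose components are each homotopy equivalences is itself a homotopy equivalence of pairs.

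First, I would observe that the kernel hypothesis, together with $\pi_1(f)$ being an isomorphism, implies that $\pi_1(f)$ descends to an isomorphism $\bar{f}_*:\pi_1(X)\to\pi_1(X')$. Indeed, the surjectivity of $\pi_1(\iota_A)$ and $\pi_1(\iota_{A'})$ yields identifications $\pi_1(X)\cong\pi_1(A)/\mathrm{Ker}(\pi_1(\iota_A))$ and $\pi_1(X')\cong\pi_1(A')/\mathrm{Ker}(\pi_1(\iota_{A'}))$, and the hypothesis says exactly that $\pi_1(f)$ matches these two kernels bijectively.

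Next, since $X'$ is aspherical it is a $K(\pi_1(X'),1)$, so I would choose a map $g:X\to X'$ realizing $\bar{f}_*$ on fundamental groups; such a map exists and is unique up to based homotopy. The two composites $g\circ\iota_A$ and $\iota_{A'}\circ f$, viewed as maps from $A$ to the aspherical space $X'$, induce the same homomorphism on $\pi_1$ by construction, so they are based-homotopic. Using the homotopy extension property for the cofibration $\iota_A:A\hookrightarrow X$, I would absorb this homotopy into $g$ to obtain a map $F:X\to X'$ with $F|_A=\iota_{A'}\circ f$.

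The map $F$ is then a map between aspherical spaces inducing the isomorphism $\bar{f}_*$ on $\pi_1$, and so is a homotopy equivalence by Whitehead's theorem. Since $f$ is a homotopy equivalence by hypothesis, $(F,f):(X,A)\to(X',A')$ is a map of pairs whose component maps are each homotopy equivalences; for a cofibration pair this is known to force a homotopy equivalence of pairs, which is the conclusion sought. The delicate step is the basepoint bookkeeping in matching $g\circ\iota_A$ with $\iota_{A'}\circ f$, since maps into a $K(\pi,1)$ are classified by $\pi_1$-homomorphisms only up to conjugation; one resolves this by first choosing the basepoint image of $g$ to coincide with $\iota_{A'}(f(*))$ before invoking asphericity, which is legitimate because $\bar{f}_*$ is defined only up to an inner automorphism anyway.
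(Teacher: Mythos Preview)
Your argument is correct and rests on the same essential idea as the paper's: asphericity of $X'$ lets one produce a map $X\to X'$ extending $\iota_{A'}\circ f$, and the kernel condition forces this map to be a $\pi_1$-isomorphism between aspherical spaces, hence a homotopy equivalence.

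The packaging differs slightly. The paper builds the extension directly by relative obstruction theory: it first extends $\iota_{A'}f$ over the relative $2$-skeleton $X^{[2]}\cup A$ using the fundamental-group hypotheses, and then observes that the higher obstructions lie in $H^{q+1}(X,A;\pi_q(X'))=0$ for $q\geq2$ since $X'$ is aspherical. You instead invoke the global classification of maps into a $K(\pi,1)$ to get a map $g:X\to X'$ with the right $\pi_1$, then use the homotopy extension property to make $g$ restrict to $\iota_{A'}\circ f$ on $A$. Your route is marginally more elementary in that it avoids naming obstruction groups, at the cost of the small basepoint discussion you flag; the paper's route is more direct in that the extension is built relative to $A$ from the start. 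You are also more explicit than the paper about why the resulting map is a homotopy equivalence \emph{of pairs}, citing the standard cofibration-pair fact; the paper leaves that step implicit.
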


\begin{proof}
The fundamental group conditions imply that $g=\iota_{A'}f$ 
extends to a map from the relative 2-skeleton $X^{[2]}\cup{A}$.
The further obstructions to extending $g$ to a map from $X$ to $X'$ lie in $H^{q+1}(X,A;\pi_q(X'))$,
for $q\geq2$. Since $X'$ is aspherical these groups are 0.
The other hypotheses imply that any extension $h:X\to{X'}$ induces 
an isomorphism on fundamental groups,
and hence is a homotopy equivalence.
\end{proof}

We would like to have an analogue of Lemma \ref{asph} for
the cases when $\pi_X\cong\mathbb{Z}$ and $\chi(X)=1$.
If $(X,\partial{X})$ is a $PD_4$-pair such that $X\simeq{S^1}\vee{S^2}$
then $\pi_2(X)\cong\mathbb{Z}[\pi_X]$ and $\pi_3(X)\cong\Gamma_W(\mathbb{Z}[\pi_X])$,
where  $\Gamma_W$ is the quadratic functor of Whitehead.
Let $(X,\partial{X})$ and  $(\widehat{X},\partial{\widehat{X}})$ 
be  two such $PD_4$-pairs, and let $\iota_X$ and $\iota_{\widehat{X}}$
be the inclusions of the boundaries. 
Then any homotopy equivalence $f:\partial{X}\to\partial{\widehat{X}}$ 
such that $f\iota_X\sim\iota_{\widehat{X}}$
extends across the relative 3-skeleton $X^{[3]}\cup\partial{X}$, since
$H^3(X,\partial{X};f^*\pi_2(\widehat{X}))\cong{H_1(X;\mathbb{Z}[\pi_X])}=0$.
The only obstruction to extending such an $f$ to a map from $X$ to $\widehat{X}$
lies in $H^4(X,\partial{X};f^*\pi_3(\widehat{X}))\cong{H_0(X;f^*\pi_3(\widehat{X}))}
\cong\mathbb{Z}\otimes_{\mathbb{Z}[\pi_X]}\Gamma_W(\mathbb{Z}[\pi_X])$.
(Any such extension would be a homotopy equivalence.)
This obstruction is perhaps determined by the equivariant intersection pairings on
$\pi_2(X)$ and $\pi_2(\widehat{X})$.
Can we use the additional constraints that $(X,\partial{X})$ and  $(\widehat{X},\partial{\widehat{X}})$ 
are codimension-0 submanifolds of $S^4$?
(Note also that a further extension to the case when $\pi_1(Y)\cong\mathbb{Z}$ and $\chi(Y)>1$
would imply the Unknotting Theorem for orientable surfaces in $S^4$.)

\section{abelian embeddings}

In so far as we hope to apply 4-dimensional topological surgery  
to the complementary regions, 
we need to assume that $\pi_X$ and $\pi_Y$ are ``good" 
in the sense of \cite{FQ}.
At present, the class of groups known to be good is somewhat larger 
than the class of elementary amenable groups.

The subclass of nilpotent groups is of particular interest.  
If $\pi_X$ is nilpotent then $j_{X*}$ is onto, 
since $H_1(j_X)$ is onto,
and any subset of a nilpotent group $G$ whose image generates
the abelianization $G/G'$ generates $G$.
Since $j_{X*}$ is onto, $c.d.X\leq2$, by \cite[Theorem 5.1]{Hi17}.
(Similarly, if $\pi_Y$ is nilpotent then $j_{Y*}$ is onto and $c.d.Y\leq2$.)
If $\pi_X$ and $\pi_Y$ are each nilpotent then $j$ is bi-epic,
by Lemma \ref{minimal} above.
There are also purely algebraic reasons why nilpotent groups 
should be of particular interest. 
Firstly, there is the well-known connection between homology,
lower central series and (Massey) products (as used in \cite{Hi17}).
Secondly, if a group $G$ is finite or solvable and every homomorphism
$f:H\to{G}$ which  induces an epimorphism on abelianization is an epimorphism
then $G$ must be nilpotent. 
(See pages 132 and 460 of \cite{Rob}.)

The consequences of the Mayer-Vietoris sequence noted above together with the fact that
the higher $L^2$ Betti numbers of amenable groups vanish
give a simple but useful constraint.

\begin{lemma}
\label{L2}
If $c.d.X\leq2$ and $\beta^{(2)}_1(\pi_X)=0$ then either $\chi(X)=0$ and $X$ is aspherical 
or $\chi(X)=1$.
\end{lemma}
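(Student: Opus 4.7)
The strategy is to express $\chi(X)$ via $L^2$-Betti numbers, eliminate the contributions in degrees $\geq 3$ and in degree $1$ using the two hypotheses, and then combine the resulting non-negativity with the a priori bound $\chi(X)\leq 1$ from \S 1 to pin $\chi(X)$ down to $\{0,1\}$.

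Concretely, Atiyah's formula gives $\chi(X)=\sum_{i\geq 0}(-1)^i\beta^{(2)}_i(X)$. The assumption $c.d.X\leq 2$ means $C_*(\widetilde X)$ is chain-homotopy equivalent to a finite projective $\mathbb{Z}[\pi_X]$-complex of length $2$, so $\beta^{(2)}_i(X)=0$ for $i\geq 3$. Next I would argue that $\beta^{(2)}_1(X)=\beta^{(2)}_1(\pi_X)$: a $K(\pi_X,1)$ can be built from $X$ by attaching cells only in dimensions $\geq 3$ (killing $\pi_2(X)$, then $\pi_3(X)$, and so on), and this alters neither $C_i$ nor $\partial_i$ for $i\leq 2$, while $\beta^{(2)}_1$ depends only on these data. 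The hypothesis $\beta^{(2)}_1(\pi_X)=0$ then gives $\chi(X)=\beta^{(2)}_0(X)+\beta^{(2)}_2(X)\geq 0$, so $\chi(X)\in\{0,1\}$.

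If $\chi(X)=1$ we are done, so suppose $\chi(X)=0$. Then $\beta^{(2)}_0(X)=0$ (forcing $\pi_X$ infinite) and $\beta^{(2)}_2(X)=0$, and it remains to show $X$ is aspherical. I would argue $\pi_2(X)=0$ as a $\mathbb{Z}[\pi_X]$-module: $\pi_2(X)=H_2(\widetilde X;\mathbb{Z})$ is the kernel of $\partial_2$ inside the finitely generated free module $C_2(\widetilde X)$, and by L\"uck's dimension-flatness of the group von Neumann algebra $\mathcal{N}(\pi_X)$ over $\mathbb{Z}[\pi_X]$ one has $\beta^{(2)}_2(X)=\dim_{\mathcal{N}(\pi_X)}\bigl(\pi_2(X)\otimes_{\mathbb{Z}[\pi_X]}\mathcal{N}(\pi_X)\bigr)$; vanishing of this dimension combined with the injection $\pi_2(X)\hookrightarrow\mathcal{N}(\pi_X)^n$ (induced by $\pi_2(X)\subseteq C_2(\widetilde X)$) forces $\pi_2(X)=0$. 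The length-$2$ complex $C_*(\widetilde X)$ is then acyclic in positive degrees, so $\widetilde X$ is contractible and $X$ is aspherical.

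The main obstacle is this last implication $\beta^{(2)}_2(X)=0\Rightarrow\pi_2(X)=0$, which is where L\"uck's extension of the von Neumann dimension to arbitrary modules is needed; for finitely generated projectives the statement is routine but $\pi_2(X)$ need not be projective unless $c.d.\pi_X\leq 2$. The other parts of the proof are bookkeeping with the additivity of the Euler characteristic and the observation that $\beta^{(2)}_1$ sees only the $2$-skeleton.
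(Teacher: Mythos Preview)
Your argument is correct and follows essentially the same route as the paper's proof. The paper compresses everything into a citation of \cite[Theorem~2.5]{Hi}: it replaces $C_*(\widetilde X)$ by a finite free $\mathbb{Z}[\pi_X]$-complex $D_*$ of length $\leq 2$ and asserts $\chi(X)=\dim_{\mathcal N(\pi_X)}H_2(D_*)\geq 0$, with equality only if $D_*$ is acyclic. You unpack exactly this: Atiyah's formula, the identification $\beta_1^{(2)}(X)=\beta_1^{(2)}(\pi_X)$, the a~priori bound $\chi(X)\leq 1$ from \S1, and the faithfulness of the von~Neumann dimension on submodules of finitely generated projectives to force $\pi_2(X)=0$ when $\chi(X)=0$. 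One small sharpening: for the step ``$\beta_2^{(2)}(X)=0\Rightarrow\pi_2(X)=0$'' it is cleanest to note that since $D_1$ is free, tensoring the short exact sequence $0\to\pi_2(X)\to D_2\to D_1$ with $\mathcal N(\pi_X)$ keeps $\mathcal N(\pi_X)\otimes\pi_2(X)\hookrightarrow\mathcal N(\pi_X)\otimes D_2$ injective, and then semihereditarity of $\mathcal N(\pi_X)$ plus faithfulness of $\dim_{\mathcal N(\pi_X)}$ on finitely generated projectives finishes it; this avoids the slightly vague appeal to ``dimension-flatness''.
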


\begin{proof}
This follows from a mild extension of  \cite[Theorem 2.5]{Hi}.
Since $c.d.X\leq2$ and $X$ is homotopy equivalent to a finite 3-complex,
$C_*(\widetilde{X})$ is chain homotopy equivalent to 
a finite free $\mathbb{Z}[\pi_X]$-complex $D_*$ of length at most 2.
If $\beta_1^{(2)}(\pi_X)=0$ then $\chi(X)=\chi(D_*)=dim_{\mathcal{N}(\pi_X)}H_2(D_*)\geq0$,
with equality only if $D_*$ is acyclic,
in which case  $X$ is aspherical.
\end{proof}

In particular, if $\pi_X$ is elementary amenable and $\chi(X)=0$
then $\pi_X\cong\mathbb{Z}$ or $\mathbb{Z}*_m$ 
(with presentation $\langle{a,t}|tat^{-1}=a^m\rangle$), for some $m\not=0$.
(See \cite[Corollary 2.6.1]{Hi}.)

The first $L^2$-Betti number vanishes also for semidirect products $N\rtimes\mathbb{Z}$
with $N$ finitely generated. 
(This observation is used in Theorem \ref{mtor} below.)

\begin{ex}
If $M=M(-2;(1,0))$ or $M(-2;(1,4))$ and $j$ is bi-epic then $X\simeq{Kb}$.
\end{ex}

In each case $\pi$ is polycyclic and $\pi/\pi'\cong\mathbb{Z}\oplus(\mathbb{Z}/2\mathbb{Z})^2$.
Hence $\chi(X)=0$, and so $c.d.\pi_X\leq2$.
Since $\pi_X$ is a quotient of $\pi$ and $\pi_X/\pi_X'\cong\mathbb{Z}\oplus\mathbb{Z}/2\mathbb{Z}$
we must have $\pi_X\cong\mathbb{Z}*_{-1}=\mathbb{Z}\rtimes_{-1}\!\mathbb{Z}$.
Since $c.d.X\leq2$ and $\chi(X)=0$ the classifying map $c_X:X\to{Kb}=K(\mathbb{Z}\rtimes_{-1}\!\mathbb{Z},1)$
is a homotopy equivalence.

\smallskip
If we restrict further to the abelian case the possible groups are known.
If $\pi_X$ is abelian and $\chi(X)=0$ then either 
$\pi_X\cong\mathbb{Z}$ and $H_2(X;\mathbb{Z})=0$ 
or $\pi_X\cong\mathbb{Z}^2$ and $H_2(X;\mathbb{Z})\cong\mathbb{Z}$.
If $\pi_X$ is abelian and $\chi(X)=1$ then $\chi(Y)=1$ also,
and so $\beta_1(\pi_X)\geq\beta_2(\pi_X)$.
In the latter case it follows easily that 
$\pi_X\cong\mathbb{Z}/k\mathbb{Z}$, $\mathbb{Z}\oplus\mathbb{Z}/k\mathbb{Z}$,
$\mathbb{Z}^2$ or $\mathbb{Z}^3$.
Hence either 
$\beta=0$ and $\pi_X\cong\mathbb{Z}/k\mathbb{Z}$
or $\beta=2$ and $\pi_X\cong\mathbb{Z}\oplus\mathbb{Z}/k\mathbb{Z}$,
for some $k\geq1$, or $\beta=1,3,4$ or 6 and 
$\pi_X\cong\mathbb{Z}^{\lfloor\!\frac{\beta+1}2\!\rfloor }$.
(See also \cite[Theorem 7.1]{Hi17}.)

\begin{lemma}
\label{finite}
If  $\pi_X$ is abelian of rank at most $1$ 
then $X$ is homotopy equivalent to a finite $2$-complex.
If moreover $\pi_X$ is cyclic then $X\simeq{S^1}$ or $S^1\vee_\ell{S^2}$,
for some $\ell\in\mathbb{Z}$.
or $P_\ell=S^1\vee_\ell{e^2}$, with $\ell\not=0$.
\end{lemma}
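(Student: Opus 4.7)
The plan is first to cut $C_*(\widetilde X)$ down to a short projective complex, and then to split into cases by the isomorphism type of $\pi_X$, using Corollary~\ref{2concor} where it applies and otherwise building a candidate $2$-complex by hand and comparing it with $X$ via Theorem~\ref{2con}.

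Since $\pi_X$ is abelian, hence nilpotent, the remarks immediately before Lemma~\ref{L2} give $j_{X*}$ onto and $c.d.X\leq 2$, so $C_*(\widetilde X)$ is chain homotopy equivalent to a finite projective $\mathbb Z[\pi_X]$-complex of length at most $2$. The classification recorded just before the lemma leaves $\pi_X\cong\mathbb Z$, $\mathbb Z/\ell\mathbb Z$, or $\mathbb Z\oplus\mathbb Z/k\mathbb Z$. When $\pi_X\cong\mathbb Z$ the ring $\mathbb Z[t,t^{-1}]$ has every finitely generated projective module free, so $C_*(\widetilde X)$ is chain equivalent to a finite free length-$2$ complex and Corollary~\ref{2concor} yields $X\simeq S^1\vee\bigvee^{\chi(X)}S^2$, settling both the main statement and the $\mathbb Z$ portion of the cyclic assertion (with $\chi(X)\in\{0,1\}$). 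When $\pi_X\cong\mathbb Z/\ell\mathbb Z$, where $\chi(X)=1$, I would choose a loop $\gamma\colon S^1\to X$ representing a generator; since $\gamma^\ell\simeq\ast$, extending by a disc produces a map $f\colon P_\ell\to X$ inducing an isomorphism on $\pi_1$, and since $\chi(P_\ell)=1=\chi(X)$ with both sides of $c.d.\leq 2$, Theorem~\ref{2con} reduces the problem to showing $f$ is $2$-connected. The remaining case $\pi_X\cong\mathbb Z\oplus\mathbb Z/k\mathbb Z$ is entirely analogous, replacing $P_\ell$ by the presentation $2$-complex $W$ on generators $a,b$ with relators $[a,b]$ and $b^k$; again $\chi(W)=1=\chi(X)$.

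The main obstacle is the $2$-connectedness step, i.e.\ surjectivity of $\pi_2(f)$. The route I would follow is to compare the projective resolution structure of $C_*(\widetilde X)$ with the explicit length-$2$ free resolution of $\mathbb Z$ over $\mathbb Z[\pi_X]$ coming from the domain of $f$, and to argue that the projective module occurring as $\pi_2(X)$ is in fact free of the rank dictated by $\chi(X)$. This is essentially the D$(2)$-property for the rank-at-most-$1$ abelian groups at issue; once it is in hand, Theorem~\ref{2con} closes the argument. The torsion cases, and $\mathbb Z\oplus\mathbb Z/k\mathbb Z$ in particular, are where the projective-module bookkeeping becomes most delicate.
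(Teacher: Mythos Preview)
Your diagnosis that the crux is the D(2) property for the groups in question is correct, and this is exactly what the paper invokes. However, the specific mechanism you propose for the torsion cases has a concrete error: you say you would ``argue that the projective module occurring as $\pi_2(X)$ is in fact free of the rank dictated by $\chi(X)$'', but for $\pi_X\cong\mathbb Z/\ell\mathbb Z$ with $\ell>1$ this is false. Already for $P_\ell$ itself one has $\pi_2(P_\ell)\cong(a-1)\mathbb Z[\mathbb Z/\ell\mathbb Z]$, the augmentation ideal, which is not projective (let alone free); by Schanuel's Lemma $\pi_2(X)$ is stably isomorphic to this, so freeness is not on the table. More seriously, even if one knows the abstract isomorphism type of $\pi_2(X)$, the \emph{particular} map $f\colon P_\ell\to X$ you build from a generator and an arbitrary null-homotopy of $\gamma^\ell$ need not carry a generator of $\pi_2(P_\ell)$ to one of $\pi_2(X)$; the $2$-connectedness of that $f$ is not forced, so Theorem~\ref{2con} cannot be applied to it without further work.

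The paper avoids this by separating the two steps. First it cites the D(2) property as an established result (Johnson for cyclic groups, Edwards for $\mathbb Z\oplus\mathbb Z/k\mathbb Z$) to conclude that $X$ is homotopy equivalent to \emph{some} finite $2$-complex $K$, without naming a specific map into $X$. Then, for the cyclic case, it cites the Dyer--Sieradski classification of homotopy types of $2$-complexes with cyclic $\pi_1$ and minimal Euler characteristic to identify $K$ with $S^1$, $S^1\vee S^2$, or $P_\ell$. Your $\pi_X\cong\mathbb Z$ case via Corollary~\ref{2concor} is fine and matches the paper's implicit treatment; the difference lies in the torsion cases, where constructing a map first and then trying to verify it is $2$-connected is genuinely harder than quoting D(2) and then classifying $2$-complexes.
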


\begin{proof}
The first assertion follows from the facts that $c.d.X\leq2$,
as just observed, 
and that the $\mathcal{D}(2)$ property holds for
cyclic groups (see \cite[page 235]{Jo}) 
and for the groups $\mathbb{Z}\oplus\mathbb{Z}/\ell\mathbb{Z}$ \cite{Ed}. 
If $\pi_X\cong\mathbb{Z}/\ell\mathbb{Z}$ is cyclic then  $X\simeq{S^1}$ or $S^1\vee{S^2}$,
if $\ell=0$, or $P_\ell=S^1\vee_\ell{e^2}$, if $\ell\not=0$ \cite{DS}.
\end{proof}

We shall show later that a similar result holds when $\pi_X\cong\mathbb{Z}^2$.

We shall say that an embedding $j$ is {\it abelian\/} or {\it nilpotent\/} 
if $\pi_X$ and $\pi_Y$ are each abelian or nilpotent, respectively.
Ten of the thirteen 3-manifolds with elementary amenable 
fundamental groups and which embed in $S^4$ 
(see \cite{CH}) have abelian embeddings.
(Apart from the Poincar\'e homology 3-sphere $S^3/I^*$, 
which does not embed smoothly,
these derive from the empty link $\emptyset$, 
the unknot $U$, the 2-component links $4^2_1$, $5^2_1$, $6^2_1$, $8^2_2$,
$9^2_{53}$, $9^2_{61}$ and  the Borromean rings $6^3_2$.)
In at least four cases ($S^3$, $S^3/Q(8)$, $S^3/I^*$ and $S^2\times{S^1}$)
the abelian embedding is essentially unique.
The ``half-turn" flat 3-manifold $G_2=M(-2;(1,0))$ 
and the $\mathbb{N}il^3$-manifold $M(-2;(1,4))$ 
bound regular neighbourhoods of embeddings 
of the Klein bottle $Kb$ in $S^4$,
but have no abelian embeddings.
The status of one $\mathbb{S}ol^3$-manifold is not yet known.

When $\beta\leq1$ we must have $\chi(X)=1-\beta$.
If $L$ is a 2-component slice link with unknotted components 
(such as the trivial 2-component link, or the Milnor boundary link)
and $M=M(L)$ then $\beta=2$ and $M$ has an abelian embedding (with $\chi(X)=\chi(Y)=1$), 
and also an embedding with $\chi(X)=1-\beta=-1$ and $\pi_Y=1$.
However it shall follow from the next lemma that if $\beta>2$ then $M$ 
cannot have both an abelian embedding and also one with $\chi(X)=1-\beta$.

\begin{lemma}
 \label{completion}
Let $j:M\to{S^4}$ be an embedding $j$ such that $H_1(Y;\mathbb{Z})=0$,
and let $S\subset\Lambda_\beta=\mathbb{Z}[\pi/\pi']$ be the multiplicative system consisting 
of all elements $s$ with augmentation $\varepsilon(s)=1$.
If the augmentation homomorphism $\varepsilon:\Lambda_{\beta{S}}\to\mathbb{Z}$
factors through an integral domain $R\not=\mathbb{Z}$ then $H_1(M;R)$ has rank $\beta-1$ as an $R$-module.
\end{lemma}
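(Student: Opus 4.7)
Proof plan. Set $K = \mathrm{Frac}(R)$; since $K$ is $R$-flat we have $\mathrm{rank}_R H_1(M;R) = \dim_K H_1(M;K)$, with $K$ viewed as a $\pi$-module via $\pi\to\pi/\pi'\to R\to K$. Because $R\neq\mathbb{Z}$ and $R\to\mathbb{Z}$ factors the augmentation, some element of $\pi/\pi'$ must have image different from $1$ in $R^\times$, hence in $K^\times$; so the $\pi$-action on $K$ is nontrivial and $K^\pi = K_\pi = 0$. Poincar\'e duality on the closed oriented $3$-manifold $M$ then forces $H_0(M;K) = K_\pi = 0$ and $H_3(M;K)\cong H^0(M;K) = K^\pi = 0$. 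Since the cellular $\Lambda_\beta$-chain complex of the maximal free abelian cover $\widetilde M$ is finite and free, $\chi(M;K) = \chi(M) = 0$, which gives $\dim_K H_1(M;K) = \dim_K H_2(M;K)$.

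The hypothesis $H_1(Y;\mathbb{Z}) = 0$ combined with the Alexander-duality isomorphism $H_2(X;\mathbb{Z})\cong H^1(Y;\mathbb{Z})$ recalled in \S1 yields $H_2(X;\mathbb{Z}) = 0$; together with $H_i(X;\mathbb{Z}) = 0$ for $i\geq 3$ (as $X$ has nonempty boundary) this gives $H_*(X;\mathbb{Z}) = (\mathbb{Z},\mathbb{Z}^\beta,0,0,0)$ and $\chi(X;K) = \chi(X) = 1-\beta$. Granted the key vanishing $\dim_K H_i(X;K) = 0$ for $i\geq 2$, we obtain $\dim_K H_1(X;K) = \beta-1$; feeding this into the long exact sequence of $(X,M)$ with $K$-coefficients, together with Poincar\'e--Lefschetz duality $\dim_K H_n(X,M;K) = \dim_K H_{4-n}(X;K)$ over the field $K$, collapses the sequence to an isomorphism $H_1(M;K)\cong H_1(X;K)$, giving $\dim_K H_1(M;K) = \beta-1$.

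The main obstacle is this key vanishing of $H_i(X;K)$ for $i\geq 2$. I would argue it by choosing a handle decomposition of $X$ with one $0$-handle and no $4$-handle, then cancelling $2$-$3$ handle pairs against each other using that $H_2(X;\mathbb{Z}) = H_3(X;\mathbb{Z}) = 0$ makes every $3$-handle algebraically cancellable, to reduce $X$ up to homotopy to a $2$-dimensional complex. The resulting length-$2$ cellular $\Lambda_\beta$-chain complex of $\widetilde X$, once tensored with the flat $\Lambda_\beta$-algebra $K$, has $H_i = 0$ for $i\geq 3$ for free; and $H_2(X;K) = 0$ follows because integer injectivity of the top boundary (forced by $H_2(X;\mathbb{Z}) = 0$ once the reduced complex has no $3$-cells) lifts to $\Lambda_\beta$-injectivity---any nonzero integer maximal minor remains nonzero in $\Lambda_\beta$---and this injectivity is preserved under the flat extension to $K$. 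The subtle point in this step is the handle cancellation argument itself, which requires the topological $s$-cobordism and goodness of $\pi_X$ in the full generality of the lemma, but can be circumvented at the level of ranks by a Cartan--Leray spectral sequence computation that controls the $\Lambda_\beta$-torsion of $H_i(\widetilde X;\mathbb{Z})$ for $i\geq 2$ directly from $H_i(X;\mathbb{Z}) = 0$.
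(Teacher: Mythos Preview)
Your overall strategy is sound and genuinely different from the paper's. The paper never computes $H_*(X;K)$; instead it invokes Stallings' theorem (via $H_2(X;\mathbb{Z})=0$) to identify the nilpotent quotients of $\pi$ with those of $F(\beta)$, then quotes \cite[Lemma~4.9]{HiA} to conclude that the localized Alexander module $A(\pi)_S$ is free of rank $\beta$, and reads off the rank of $H_1(M;R)$ from the short exact sequence of the pair $(M,*)$. Your route through $\chi(X)$, Poincar\'e--Lefschetz duality for $(X,M)$, and the long exact sequence is more geometric and avoids the Alexander-module machinery, at the cost of needing the vanishing $H_i(X;K)=0$ for $i\geq2$.

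However, your justification of that vanishing has a real error. You write that $\Lambda_\beta$-injectivity of $\partial_2$ ``is preserved under the flat extension to $K$'', but $\Lambda_\beta\to K$ is \emph{not} flat: for instance with $\beta=1$ and $R=\Lambda_1/(t^2-t+1)$, the injection $\Lambda_1\xrightarrow{t^2-t+1}\Lambda_1$ becomes zero over $K$. The correct argument does not need flatness, nor any reduction to a $2$-complex. Take any finite free $\Lambda_\beta$-chain complex $C_*$ for the $\mathbb{Z}^\beta$-cover of $X$. Since $H_i(X;\mathbb{Z})=0$ for $i\geq2$, the ranks satisfy $\mathrm{rank}\,C_i=\rho_i+\rho_{i+1}$ for $i\geq2$, where $\rho_i=\mathrm{rank}_{\mathbb{Z}}(\partial_i\otimes\mathbb{Z})$. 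Each $\partial_i$ therefore has a $\rho_i\times\rho_i$ minor $m_i\in\Lambda_\beta$ with $\varepsilon(m_i)\neq0$. The crucial point is that \emph{because $R\to\mathbb{Z}$ exists}, the image of $m_i$ in $R$ maps to $\varepsilon(m_i)\neq0$, hence is nonzero in $R$ and in $K$. Thus $\mathrm{rank}_K(\partial_i\otimes K)\geq\rho_i$ for all $i\geq2$, and a dimension count gives $H_i(X;K)=0$ for $i\geq2$ directly, with no handle cancellation, no $s$-cobordism, and no spectral sequence needed. Once you replace the flatness claim by this observation, your argument goes through.
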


\begin{proof}
Let  $*$ be a basepoint for $M$ and $A(\pi)=H_1(M,*;\Lambda_\beta)$ 
be the Alexander module of $\pi$.
(See  \cite[Chapter 4]{HiA}.)
Since $H_2(X;\mathbb{Z})=0$,
the inclusion of representatives for a basis of $H_1(X;\mathbb{Z})\cong\mathbb{Z}^\beta$
induces isomorphisms $F(\beta)/F(\beta)_{[n]}\cong\pi/\pi_{[n]}$, for all $n\geq1$,
by a theorem of Stallings. (See \cite[Lemma 3.1]{Hi17}.)
Hence $A(\pi)_S\cong(\Lambda_{\beta{S}})^\beta$,
by \cite[Lemma 4.9]{HiA}.
Since $\varepsilon$ factors through $R$, 
the exact sequence of the pair $(M,*)$
with coefficients $R$ gives an exact sequence
\[
0\to{H_1(M;R)}\to{R}\otimes_{\Lambda_\beta}{A(\pi)}\cong{R^\beta}\to{R}\to{R}\otimes_{\Lambda_\beta}\mathbb{Z}=
\mathbb{Z}\to0,
\]
from which the lemma follows.
(Note that the hypotheses on $R$ imply that $\mathbb{Z}$ is an $R$-torsion module.)
\end{proof}

\section{homology spheres and handles}

If $M$ is an integral homology 3-sphere then it bounds a contractible 4-manifold,
and so has an abelian embedding with $X$ and $Y$ each contractible. 
They are determined up to homeomorphism by their boundaries \cite{FQ}, 
and so the abelian embedding is unique.
Moreover, the complementary regions are homeomorphic.
When $M=S^3$, the result goes back to the Brown-Mazur-Schoenflies Theorem,
which does not use surgery. 
(In this special case the embedding is essentially unique!)

It is not clear whether non-simply connected homology spheres must have
embeddings with one or both of $\pi_X$ and $\pi_Y$ nontrivial. 
Figure 3 of \cite{Hi17} gives an example with $\pi_X\cong\pi_Y\cong{I^*}$,
the binary icosahedral group.
In this case the homology sphere is the result of surgery on a complicated
4-component bipartedly trivial link, and probably has no simpler description.
The Poincar\'e homology 3-sphere $S^3/I^*$ is not the result 
of 0-framed surgery on any bipartedly slice link,
since it does not embed smoothly.

If instead $M$ is an orientable homology handle,  i.e., 
if $\pi/\pi'=H_1(M;\mathbb{Z})\cong\mathbb{Z}$, 
so $M$ has the homology of $S^2\times{S^1}$,
then $\pi'/\pi''$ is a finitely generated torsion module over 
$\mathbb{Z}[\pi/\pi']\cong\Lambda=\mathbb{Z}[t,t^{-1}]$.
Equivariant Poincar\'e duality and the universal coefficient theorem 
together define a nonsingular hermitean pairing $b$ on $\pi'/\pi''$, 
with values in $\mathbb{Q}(t)/\Lambda$, called the Blanchfield pairing.
The pairing is {\it neutral\/} if $\pi'/\pi''$ has a submodule $N$ 
which is its own annihilator with respect to $b$, i.e., 
such that $N=\{m\in\pi'/\pi''\mid{b(m,n)=0}~\forall{n}\in{N}\}$.
A knot $K$ is {\it algebraically slice\/} if the Blanchfield pairing of $M(K)$
is neutral.

\begin{theorem}
\label{neutral}
Let $M$ be  an orientable homology handle.
If $M$ embeds in $S^4$ then the Blanchfield pairing on 
$\pi'/\pi''=H_1(M;\mathbb{Z}[\pi/\pi'])$ is neutral.
There is an abelian embedding $j:M\to{S^4}$
if and only if $\pi'$ is perfect,
and then $X\simeq{S^1}$ and $Y\simeq{S^2}$.
\end{theorem}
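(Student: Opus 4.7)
The plan is to prove the three assertions of the theorem in sequence. Let $\Lambda=\mathbb{Z}[t^{\pm 1}]$, viewed as a $\pi$-module via the abelianization $\pi\to\pi/\pi'\cong\mathbb{Z}$. For any embedding $j:M\to S^4$, Mayer--Vietoris forces, after interchanging $X$ and $Y$, $H_1(X;\mathbb{Z})\cong\mathbb{Z}$ and $H_1(Y;\mathbb{Z})=0$, whence $H_2(X;\mathbb{Z})=0$ and $H_2(Y;\mathbb{Z})\cong\mathbb{Z}$ by Poincar\'e--Lefschetz duality. Since the abelianization factors through $\pi_X$, the coefficient system $\Lambda$ extends across $X$. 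Set $N=\ker\bigl(H_1(M;\Lambda)\to H_1(X;\Lambda)\bigr)$. A half-lives-half-dies argument --- the long exact sequence of $(X,M)$ with $\Lambda$-coefficients together with equivariant Poincar\'e--Lefschetz duality --- identifies $N^\perp$ with $N$ under the Blanchfield pairing; this is the classical slice-knot calculation adapted to our setting, and proves the neutrality assertion.

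Now assume $j$ is abelian. Abelianness together with the splitting of $H_1$ forces, up to interchange, $\pi_X\cong\mathbb{Z}$ and $\pi_Y=1$. Then \cite[Theorem~5.1]{Hi17} gives $c.d.X\le 2$, and $\chi(X)+\chi(Y)=2$ with $\chi(Y)=2$ gives $\chi(X)=0$, so Corollary~\ref{2concor} yields $X\simeq S^1$. The simply-connected $Y$ has the homology of $S^2$, hence $Y\simeq S^2$ by Whitehead. Because the universal cover of $X\simeq S^1$ is contractible, $H_1(X;\Lambda)=0$, so $N=H_1(M;\Lambda)=\pi'/\pi''$. Since the Blanchfield pairing is nonsingular and $N=N^\perp$, we conclude $\pi'/\pi''=0$, i.e.\ $\pi'$ is perfect.

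Conversely, if $\pi'=\pi''$ then the Alexander module vanishes and $M$ is $\Lambda$-homology equivalent to $S^2\times S^1$. I would build the abelian embedding by modifying the standard embedding $S^2\times S^1\hookrightarrow S^4$ (whose complements are $S^1\times D^3$ and $S^2\times D^2$): replace $S^1\times D^3$ by a 4-manifold $X$ with $\partial X=M$ of the same $\Lambda$-homology type, using 4-dimensional topological surgery relative to the boundary (valid since $\mathbb{Z}$ is a good group in the sense of \cite{FQ}). The vanishing of $\pi'/\pi''$ is exactly the algebraic input making the relevant surgery obstruction vanish. Then $Y=\overline{S^4\setminus X}$ inherits $\pi_Y=1$ by van Kampen, and the homotopy types $X\simeq S^1$, $Y\simeq S^2$ follow as in the preceding paragraph. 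The main obstacle is this last step: exhibiting $X$ as a codimension-zero submanifold of $S^4$, not merely as an abstract 4-manifold bounding $M$. I would expect the author either to realise $M$ explicitly as $0$-framed surgery on a slice knot (bipartedly slice in the sense of \S1) and invoke the associated embedding $j_L$, or to use an $s$-cobordism argument to compare codimension-zero models inside $S^4$.
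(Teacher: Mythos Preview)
Your treatment of the first two assertions is correct and essentially matches the paper's. The paper deduces $\pi'/\pi''=0$ slightly more directly: once $X\simeq S^1$ it observes $H_2(X,M;\Lambda)\cong\overline{H^2(X;\Lambda)}=0$ by equivariant Poincar\'e--Lefschetz duality and reads off $H_1(M;\Lambda)=0$ from the long exact sequence of $(X,M)$, rather than routing through the neutrality statement as you do. Your argument --- $H_1(X;\Lambda)=0$ forces $N$ to be everything, and nonsingularity then forces $N^\perp=0$ --- is a perfectly good alternative and has the virtue of reusing the first assertion.

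For the converse, the paper does not carry out a construction at all: it simply cites \cite{Hi96} (\emph{Embedding homology equivalent $3$-manifolds in $4$-space}) and remarks that the embedding produced there is abelian. Your instinct is right that the content lies in realizing $X$ inside $S^4$ rather than abstractly, and the surgery-theoretic sketch you give is in the spirit of that paper, but you should consult \cite{Hi96} directly rather than try to reinvent the argument. Your alternative suggestion --- realizing $M$ as $0$-framed surgery on a knot with trivial Alexander polynomial and using a $\mathbb{Z}$-slice disc --- is not available in general: not every such homology handle is $M(K)$ for a knot $K$.
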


\begin{proof}
The first assertion follows on applying equivariant Poincar\'e-Lefshetz 
duality to the infinite cyclic cover of the pair $(X,M)$.
(See the proof of  \cite[Theorem 2.4]{HiA}.)

If $j$ is abelian then $\pi_X\cong\mathbb{Z}$ and $\pi_Y=1$,
while $H_2(X;\mathbb{Z})=0$  and $H_2(Y;\mathbb{Z})\cong\mathbb{Z}$.
Since $c.d.X\leq2$ and $\pi_X\cong\mathbb{Z}$,
it follows that $\pi_2(X)=H_2(X;\mathbb{Z}[\pi_X])$ 
is a free $\mathbb{Z}[\pi_X]$-module of rank $\chi(X)=0$.
Hence $\pi_2(X)=0$, and so maps $f:S^1\to{X}$ and $g:S^2\to{Y}$ 
representing generators for $\pi_X$ and $\pi_2(Y)$ are homotopy equivalences.
Since $H_2(X,M;\mathbb{Z}[\pi_X])\cong\overline{H^2(X;\mathbb{Z}[\pi_X])}=0$, 
by equivariant Poincar\'e-Lefshetz duality,
$\pi'/\pi''=H_1(M;\mathbb{Z}[\pi_X])=0$, by the homology exact sequence 
for the infinite cyclic cover of the pair $(X,M)$.
Hence $\pi'$ is perfect.

Suppose, conversely, that $\pi'$ is perfect.
Then $M$ embeds in $S^4$, by the main result of \cite{Hi96},
and examination of the proof shows that the embedding constructed 
in the theorem is abelian.
\end{proof}

Since $S^2\times{S^1}$  may be obtained by 0-framed 
surgery on the unknot, it has a standard  abelian embedding
with $X\cong{S^1\times{D^3}}$ and $Y\cong{D^2}\times{S^2}$.
(In fact $Y\cong{D^2}\times{S^2}$ whenever $M=S^2\times{S^1}$,
by a result of Aitchison \cite{Ru}.)

If $K$ is an Alexander polynomial 1 knot then 
$M(K)$ has an abelian embedding, 
and if $K$ is a knot such that $M(K)$ embeds in $S^4$ then
$K$ is algebraically slice, by Theorem \ref{neutral}.
However if $K$ is a slice knot with nontrivial Alexander polynomial 
then $M(K)$ embeds in $S^4$ but no embedding is abelian.
There are obstructions beyond neutrality of the Blanchfield pairing
to slicing a knot, 
which probably also obstruct embeddings of homology handles.

\begin{theorem}
\label{homhandle}
Let $M$ be an orientable homology handle.
Then $M$ has at most one abelian embedding, up to equivalence.
\end{theorem}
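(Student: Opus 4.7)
The plan is to show that any two abelian embeddings $j_1,j_2:M\to S^4$, with resulting decompositions $S^4=X_i\cup_{j_i(M)}Y_i$ for $i=1,2$, are equivalent. The strategy is to produce pair homeomorphisms $(X_1,M)\cong(X_2,M)$ and $(Y_1,M)\cong(Y_2,M)$, arrange that they agree on their common boundary, and then glue them to a self-homeomorphism of $S^4$. Theorem \ref{neutral} already supplies the invariants needed for the two halves: $X_i\simeq S^1$ (aspherical, with $\pi_{X_i}\cong\mathbb{Z}$) and $Y_i\simeq S^2$ (with $\pi_{Y_i}=1$ and $H_2(Y_i;\mathbb{Z})\cong\mathbb{Z}$); moreover the intersection form on $H_2(Y_i;\mathbb{Z})$ vanishes, by the observation at the start of Section 2.

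For the aspherical half I would apply Lemma \ref{asph} with $f=\mathrm{id}_M$. Both inclusions induce the abelianization $\pi_1(j_{X_i}):\pi\to\pi_{X_i}\cong\mathbb{Z}$, so $\mathrm{Ker}(\pi_1(j_{X_1}))=\pi'=\mathrm{Ker}(\pi_1(j_{X_2}))$ and the hypotheses of Lemma \ref{asph} are met. This yields a homotopy equivalence of pairs $(X_1,M)\simeq(X_2,M)$ extending $\mathrm{id}_M$, which I would then upgrade to a pair homeomorphism rel $M$ via four-dimensional topological surgery relative to the boundary, available because $\pi_{X_i}\cong\mathbb{Z}$ is good in the sense of \cite{FQ}.

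For the simply-connected half, $Y_1$ and $Y_2$ are simply-connected compact topological 4-manifolds sharing the boundary $M$ and carrying isomorphic (both trivial) intersection forms on $H_2\cong\mathbb{Z}$. By Freedman's classification of simply-connected topological 4-manifolds, in the version for manifolds with prescribed boundary, two such manifolds are homeomorphic rel $M$ once their Kirby--Siebenmann invariants agree. Additivity of Kirby--Siebenmann under gluing along a 3-manifold gives $KS(X_i)+KS(Y_i)=KS(S^4)=0$, so the equality $KS(X_1)=KS(X_2)$ produced by the previous step forces $KS(Y_1)=KS(Y_2)$, and we obtain $(Y_1,M)\cong(Y_2,M)$.

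The final step is to glue. The restrictions to $M$ of the two pair homeomorphisms will in general differ by some self-homeomorphism of $M$; I would absorb this discrepancy either by modifying one pair homeomorphism by an extension of a suitable self-homeomorphism of $M$ through its half, or by letting the resulting self-homeomorphism $\phi$ of $M$ play the role of the $\phi$ allowed in the definition of equivalence of embeddings. Gluing then produces $\psi:S^4\to S^4$ with $\psi j_1=j_2\phi$. The main obstacle I anticipate is the relative four-dimensional topological surgery step, and, more concretely, the boundary bookkeeping needed to ensure the two pieces actually glue to a homeomorphism of $S^4$.
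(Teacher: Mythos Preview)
Your treatment of the $X$-side matches the paper's exactly: Lemma~\ref{asph} gives a homotopy equivalence of pairs extending $\mathrm{id}_M$, and then the paper invokes the specific fact that $L_5(\mathbb{Z})$ acts trivially on $\mathcal{S}_{TOP}(X_2,\partial X_2)$ (via the Wall--Shaneson theorem and the $E_8$-manifold) to promote this to a homeomorphism $F$ rel~$M$. Your ``surgery over a good group'' is the same step, stated less precisely.

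On the $Y$-side the approaches diverge. You appeal to the Freedman--Boyer classification of simply-connected $4$-manifolds with prescribed boundary, matching intersection forms and Kirby--Siebenmann invariants; since $H_1(M;\mathbb{Z})$ is torsion-free the presentation data in Boyer's theorem is vacuous, and your KS additivity argument is correct, so this does yield a homeomorphism $(Y_1,M)\cong(Y_2,M)$ rel~$M$. The paper instead works by hand: it chooses homotopy equivalences $Y_i\simeq S^2$ so that the induced maps $f_i:M\to S^2$ agree in $H^2(M;\mathbb{Z})$, then uses a pinch-map/Hopf-map argument with mapping cylinders to compute that the remaining ambiguity (an integer $t$) is detected by the self-intersection number, which vanishes because the intersection pairing on $H_2(Y_i;\mathbb{Z})$ is trivial. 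This forces $f_1\sim f_2$, hence $(Y_1,M)\simeq(Y_2,M)$ rel~$M$, and then simply-connected surgery rel boundary finishes. The paper's route is more self-contained (it uses only the observation about trivial intersection pairings already recorded in \S2, plus standard simply-connected surgery), while yours is shorter but imports Boyer's theorem as a black box.

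Since both $F$ and $G$ restrict to $\mathrm{id}_M$ in either approach, the gluing $h=F\cup G$ is immediate and your worries about ``boundary bookkeeping'' and absorbing a stray $\phi$ are unnecessary; that hedging could be dropped.
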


\begin{proof}
Assume that $j_1$ and $j_2$ are abelian embeddings of $M$.
There is a homotopy equivalence of pairs 
$(X_1,M)\simeq(X_2,M)$ which extends $id_M$, 
by Lemma \ref{asph}.
This is homotopic {\it rel} $M$ to a  homeomorphism $F$,
since the surgery obstruction group $L_5(\mathbb{Z})$ acts trivially on 
the structure set $\mathcal{S}_{TOP}(X_2,\partial{X_2})$.
(This follows from the Wall-Shaneson theorem and the existence of the
$E_8$-manifold \cite[Theorem 6.7] {Hi}.)

We may assume the homotopy equivalences $Y_1\simeq{S^2}$ and $Y_2\simeq{S^2}$
are so chosen that the corresponding maps  $f_1$ and $f_2$ from $M$ to $S^2$
induce the same class in $H^2(M;\mathbb{Z})$.
We may also assume that $f_1$ and $f_2$ agree on the 2-skeleton of $M$,
by  \cite[Theorem 8.4.11]{Sp}.
Let $p:M\to{M}\vee{S^3}$ be a pinch map, 
and $\eta:S^3\to{S^2}$ be the Hopf fibration.
Let $d_t$ be a self map of $S^3$ of degree $t$,
and let $q_t=id_M\vee{d_t}$.
Then $f_2\sim(f_1\vee\eta)q_tp$, 
for some $t\in\mathbb{Z}$. 
Let $Z_i$ be the mapping cylinder of $f_i$, for $i=1,2$.
Then $Y_i$ is homotopy equivalent to $Z_i$ {\it rel} $M$, for $i=1,2$. 

Let $P=MCyl(\eta)=\overline{\mathbb{CP}^2\setminus{D^4}}$
and $W=MCyl(f_1\vee\eta)$. 
The inclusions of $S^3$ and $M$ into $M\vee{S^3}$ and $q_tp$
induce maps $\theta$, $\psi$ and $\xi$ from $(P,S^3)$,
$(Z_1,M)$ and $(Z_2,M)$, respectively, to $(W,M\vee{S^3})$.
These induce isomorphisms of $H^2(P;\mathbb{Z})$, 
$H^2(Z_1;\mathbb{Z})$ and $H^2(Z_2;\mathbb{Z})$ 
with $H^2(W;\mathbb{Z})\cong\mathbb{Z}$.
Let $\sigma$ generate $H^2(W;\mathbb{Z})$.
The groups $H_4(P,S^3;\mathbb{Z})$, 
$H_4(Z_1,M;\mathbb{Z})$ and $H_4(Z_2,M;\mathbb{Z})$ 
are also infinite cyclic,
with generators $[P,S^3]$, $[Z_1,M]$ and $[Z_2,M]$, respectively,
but $H_4(W,M\vee{S^3};\mathbb{Z})\cong
{H_3(S^3;\mathbb{Z})}\oplus{H_3(M;\mathbb{Z})}$,
and $\xi_*[Z_2,M]=t.\theta_*[P,S^3]+\psi_*[Z_1,M]$.
Hence
\[
\xi^*\sigma^2\cap[Z_2,M]=\sigma^2\cap\xi_*[Z_2,M]=
t\sigma^2\cap\theta_*[P,S^3]+\sigma^2\cap\psi_*[Z_1,M].
\]
The inclusion of $(P,S^3)$ into $(\mathbb{CP}^2,D^4)$ 
induces isomomorphisms on (relative) cohomology, 
and so 
$\sigma^2\cap\theta_*[P,S^3]=\theta^*\sigma^2\cap[P,S^3]\not=0$.
Since the middle dimensional intersection pairing 
is trivial in each of $(Z_1,M)$ and $(Z_2,M)$, $t=0$, 
and so $f_1\sim{f_2}$.
Hence there is a homotopy equivalence of pairs $(Y_1,M)\to(Y_2,M)$
which extends $id_M$.
This is homotopic {\it rel} $M$ to a homeomorphism $G$, 
by simply-connected surgery.
The map $h=F\cup{G}$ is a homeomorphism of $S^4$ such that $hj_1=j_2$. 
\end{proof}

Part of the argument for Theorem \ref{homhandle} was suggested by Section 2 of \cite{La}.

\begin{ex}
The manifold $M=M(11_{n42})$ has an essentially unique abelian embedding,
although $M=M(K)$ for infinitely many distinct knots $K$.
\end{ex}

The knot $11_{n42}$ is the Kinoshita-Terasaka knot,
which is the simplest non-trivial knot with Alexander polynomial 1.
This bounds a smoothly embedded disc $D$ in $D^4$,
such that $\pi_1(D^4\setminus{D})\cong\mathbb{Z}$,
obtained by desingularizing a ribbon disc.
(See Figure 1.4 of \cite{HiA}.)
Hence $M$ has a smooth abelian embedding.
Since $11_{n42}$ has unkotting number 1, 
it has an annulus presentation, 
and so there are infinitely many knots $K_n$ such that $M(K_n)\cong{M}$ \cite{AJOT}. 
These knots must all have Alexander polynomial 1,
and so each determines an abelian embedding.
Are all of these embeddings smooth, and are they smoothly equivalent?

The connected sum of the homology 3-sphere represented by Figure 3 
of \cite{Hi17} with $S^2\times{S^1}$ has an embedding with 
both complementary regions having nontrivial fundamental group.
Does every nontrivial homology handle have an embedding for which $\pi_Y\not=1$?

A 1-knot $K$ is {\it homotopically ribbon\/} 
if it bounds a disc $D\subset{D^4}$ such that the inclusion of
$M(K)=\partial{D^4\setminus{D}}$ into $D^4\setminus{D}$
induces an epimorphism on fundamental groups \cite{CG}.

\begin{theorem}
Let $K$ be a $1$-knot. 
Then $M=M(K)$ has a bi-epic embedding if and only if
$K$ is homotopy ribbon.
\end{theorem}

\begin{proof}
If $K$ is homotopically ribbon then the embedding corresponding to
the slice disc demonstrating this property is clearly bi-epic.

Suppose that $M$ has a bi-epic embedding.
Let $W$ be the trace of 0-framed surgery on $K$.
Then $W$ is 1-connected, $\chi(W)=1$ and $\partial{W}=S^3\amalg{M}$.
Let $P=X\cup_MW$. 
Then $P$ is 1-connected, since $\pi(j_X)$ is an epimorphism,
$\chi(P)=1$, and $\partial{P}=S^3$, and so $P\cong{D^4}$.
Clearly $K$ is homotopy ribbon in $P$.
\end{proof}

For example, 
if $k$ is a fibred 1-knot with exterior $E(k)$ and genus $g$, 
then $K=k\#\!-\!k$ is a fibred ribbon knot,
and $M(K)$ bounds a thickening $X$ of $E(k)\subset{S^3}\subset{S^4}$,
which fibres over $S^1$, with fibre $\natural^g(S^1\times{D^2})$.

If an orientable homology handle $M$ is Seifert fibred  
then the base orbifold is orientable,
since $M$ is orientable and $\pi/\pi'$ is torsion free.
Hence $M$ has generalized Euler invariant $\varepsilon=0$, 
since $\beta$ is odd, and so is a mapping torus. 
It then follows from Theorem \ref{neutral} that if  $M$ 
has an abelian embedding then $M\cong{S^2\times{S^1}}$.

In the next theorem we do not assume here that $M$ is an homology handle.

\begin{theorem}
\label{mtor}
Let $M$ be a $3$-manifold.
If  $j:M\to{S^4}$ is an embedding such that $X$ fibres over $S^1$ then  $\chi(X)=0$,
$M$ is a mapping torus, the projection $p:M\to{S^1}$ extends to a map
from $X$ to $S^1$ and $\pi_1(j_X)$ is surjective.
Conversely, if these conditions hold then there is an embedding 
$\widehat{j}:M\to{S^4}$ 
such that $\widehat{X}$ fibres over $S^1$ and $(\widehat{X},M)$ 
is $s$-cobordant rel $M$ to $(X,M)$. 
\end{theorem}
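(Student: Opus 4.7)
The forward direction is essentially formal. A fibration $P:X\to S^1$ realises $X$ as the mapping torus of a self-homeomorphism $\phi:F\to F$ of its fibre $F$, a compact $3$-manifold (with boundary, since $\partial X=M\neq\emptyset$). Hence $\chi(X)=\chi(F)\chi(S^1)=0$. Restricting $P$ to $\partial X$ gives a fibration with fibre $\partial F$, so $M$ is the mapping torus of $\phi|_{\partial F}$ and the given projection $p:M\to S^1$ coincides with $P|_M$, which $P$ then extends. Comparing the fibration long exact sequences for $M\to S^1$ and $X\to S^1$, $\pi_1(j_X)$ is the identity on the $\pi_1(S^1)$ quotient and is the boundary inclusion $\iota_*:\pi_1(\partial F)\to\pi_1(F)$ on the fibre kernels; the surjectivity of $\iota_*$ is extracted from the embedding $X\subset S^4$, using Alexander duality applied to the infinite cyclic cover $\widetilde X\simeq F$ together with the Mayer-Vietoris relations between $H_*(X)$, $H_*(Y)$ and $H_*(M)$.

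For the converse, given $\chi(X)=0$, $M$ a mapping torus, $p:M\to S^1$ extending to $P:X\to S^1$, and $\pi_1(j_X)$ surjective, my plan is to modify $X$ by an $s$-cobordism rel $M$ so that $P$ becomes a genuine fibration. Perturbing $P$ to be transverse to a regular value $*\in S^1$, let $N=P^{-1}(*)$, a properly embedded $3$-submanifold with $\partial N=p^{-1}(*)$ a fibre of $p$. Cutting $X$ along $N$ produces a $4$-manifold $X'$ mapping to $[0,1]$, with $\partial X'=N_0\cup N_1\cup(\partial N\times I)$, where the last piece arises from $M$ cut along $\partial N$. The mapping torus construction then pastes the two copies of $N$ back together via the monodromy.

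The core step is to upgrade $X'$, by $4$-dimensional topological surgery rel $\partial N\times I$, to an $s$-cobordism between its two $N$-boundaries. The hypothesis that $\pi_1(j_X)$ is surjective supplies surjectivity of $\pi_1(N_i)\to\pi_1(X')$. To handle the higher-degree kernels one uses that $\pi_X\cong\pi_1(F)\rtimes\mathbb{Z}$ has $\beta^{(2)}_1(\pi_X)=0$ --- the $L^2$-Betti remark from \S3 --- which together with $\chi(X)=0$ forces the $\mathbb{Z}[\pi_X]$-chain complex of $\widetilde X$ to be $L^2$-acyclic, enabling the finite-domination and vanishing-obstruction statements needed to carry out the surgery. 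Gluing the resulting $s$-cobordism back via the monodromy yields a mapping torus $\widehat X$ which fibres over $S^1$, and the trace of the surgeries provides the $s$-cobordism of pairs between $(X,M)$ and $(\widehat X,M)$.

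The hard part is this surgery step. It requires $4$-dimensional topological surgery, which works only for good groups in the sense of \cite{FQ} (consistent with the paper's standing assumption on $\pi_X$); the most delicate check is the vanishing of a Farrell-type fibering obstruction, essentially the Whitehead torsion of a chain equivalence between $\widetilde X$ and a finite model, which is precisely what the $L^2$-Betti argument together with the four-dimensional topological $s$-cobordism theorem deliver.
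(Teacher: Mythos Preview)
Your outline for the forward direction is broadly right, but the surjectivity of $\pi_1(j_X)$ is not obtained the way you suggest. Alexander duality and Mayer--Vietoris are homological, and you give no mechanism for promoting them to a statement about $\pi_1(\partial F)\to\pi_1(F)$. The paper instead argues that $\beta_1^{(2)}(\pi_X)=0$ (since $\pi_X$ is (finitely presentable)-by-$\mathbb{Z}$), and then, via Lemma~\ref{L2}, that $c.d.\pi_X\leq 2$; hence $\pi_1(F)$ is free (Bieri), so $F\cong\natural^r(S^1\times D^2)$ is a handlebody, and surjectivity of $\pi_1(\partial F)\to\pi_1(F)$ is then automatic.

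For the converse your route is genuinely different from the paper's, and as written it has a real gap. The paper does \emph{not} attempt to fiber $X$ directly by cutting along a transverse preimage and doing surgery on the resulting cobordism. Instead it first uses Lemma~\ref{L2} again: since $\pi_1(j_X)$ is onto and $\pi_X$ is (finitely presentable)-by-$\mathbb{Z}$, one has $c.d.X\leq 2$ and $\beta_1^{(2)}(\pi_X)=0$, so $\chi(X)=0$ forces $X$ to be \emph{aspherical} with $\pi_X\cong F(r)\rtimes\mathbb{Z}$. One then builds the model $\widehat{X}$ explicitly as the mapping torus of a self-homeomorphism of the handlebody $N=\natural^r(S^1\times D^2)$ realising the monodromy, produces a homotopy equivalence of pairs $(X,M)\simeq(\widehat{X},\partial\widehat{X})$ (using a Stallings--Zieschang result to match the boundaries, and Waldhausen to make the boundary map a homeomorphism), and finally invokes that $L_5(F(r))$ acts trivially on $\mathcal{S}^s_{TOP}(\widehat{X},\partial\widehat{X})$ to get the $s$-cobordism. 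The embedding into $S^4$ comes from gluing $\widehat{X}$ to the original $Y$.

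Your proposal omits precisely the structural step that makes all of this work: you never establish that $X$ is aspherical or that $\pi_X\cong F(r)\rtimes\mathbb{Z}$. Without this you cannot name the fibre of the putative $\widehat{X}$, you cannot justify that your cut piece $X'$ is (or can be surgered to) an $h$-cobordism, and you cannot identify the relevant surgery obstruction group. Your appeal to ``$L^2$-acyclicity'' and a ``Farrell-type fibering obstruction'' gestures in the right direction but does not substitute for the concrete identification of $\pi_X$; and your reliance on a ``standing assumption'' that $\pi_X$ is good is misplaced --- there is no such assumption in this theorem, and the paper earns goodness by proving $\pi_X$ is free-by-$\mathbb{Z}$.
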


\begin{proof}
If $X$ fibres over $S^1$, with fibre $F$, 
then $M=\partial{X}$ is the mapping torus
of a self-homeomorphism of $\partial{F}$ and the projection $p:M\to{S^1}$ 
extends to a map from $X$ to $S^1$.
Moreover, $\chi(X)=0$ and $\pi_X$ is an extension of $\mathbb{Z}$
by the finitely presentable normal subgroup $\pi_1(F)$.
Hence $\beta_1^{(2)}(\pi_X)=0$,
by  \cite[Theorem 7.2.6]{Lue},
and so $c.d.\pi_X\leq2$, by Lemma \ref{L2}.
Hence $\pi_1(F)$ is free, by  \cite[Corollary 6.6]{Bi},
and so $F\cong\#^r(S^1\times{D^2})$, for some $r\geq0$.
Moreover,  $\pi_1(j_X)$ is surjective.

If $M$ is a mapping torus, the projection $p:M\to{S^1}$ extends to a map 
from $X$ to $S^1$ and $\pi_1(j_X)$ is surjective then $\pi_X$ 
is an extension of $\mathbb{Z}$ by a finitely presentable normal subgroup.
Since $\chi(X)=0$, the space $X$ is aspherical, 
and so $\pi_X\cong{F(r)\rtimes\mathbb{Z}}$, for some $r\geq0$.
Let $X^\infty$ be the covering space associated to the subgroup $F(r)$, 
and  let $j_{X^\infty}$ be the inclusion of $M^\infty=\partial{X^\infty}$ into $X^\infty$.
Let $\tau$ be a generator of the covering group $\mathbb{Z}$.
Fix a homotopy equivalence $h:X^\infty\to{N}=\#^r(S^1\times{D^2})$.
Then there is a self-homeomorphism $t_N$ of $N$ such that $t_Nh\sim{h}\tau$.
Let $\theta:\partial{N}\to{N}$ be the inclusion,
and let $\widehat{X}=M(t_N)$ be the mapping torus of $t_N$.
Then there is a homotopy equivalence $\alpha:M^\infty\to\partial{N}$ 
such that $\theta\alpha\sim{hj_{X^\infty}}$,
by a result of Stallings and Zieschang. (See \cite[Theorem 2]{GK}.)
We may modify $h$ on a collar neighbourhood of $\partial{X^\infty}$ so that 
$h|_{\partial{X^\infty}}=\alpha$.
Hence $h$ determines a homotopy equivalence of pairs 
$(X,M)\simeq(\widehat{X},\partial\widehat{X})$.
Since $M$ and $\partial\widehat{X}$ are orientable (Haken) manifolds
we may further arrange that $h|_M:M\to\partial\widehat{X}$ is a homeomorphism.
Hence $X$ and $\widehat{X}$ are $s$-cobordant {\it rel} $\partial$,
since $L_5(F(r))$ acts trivially on the $s$-cobordism structure set 
$\mathcal{S}^s_{TOP}(\widehat{X},\partial\widehat{X})$.
(See \cite[Theorem 6.7]{Hi}.)

The union $\Sigma=\widehat{X}\cup_MY$ is an homotopy 4-sphere, and so is homeomorphic to $S^4$.
Then the final assertion is satisfied by the composite $\widehat{j}:M\subset\widehat{X}\subset\Sigma\cong{S^4}$.
\end{proof}

In particular, if $\beta=1$ then $\chi(X)=0$ and $M$ is a rational homology handle.

\section{$\pi/\pi'\cong\mathbb{Z}^2$}

When  $\pi/\pi'\cong\mathbb{Z}^2$ there is again a simple necessary 
condition for $M$ to have an abelian embedding.

\begin{theorem}
\label{Z^2}
Let $M$ be a $3$-manifold with fundamental group $\pi$ 
such that $\pi/\pi'\cong\mathbb{Z}^2$.
If $j:M\to{S^4}$ is an abelian embedding then 
$X\simeq{Y}\simeq{S^1\vee{S^2}}$, and
$H_1(M;\mathbb{Z}[\pi_X])$ and $H_1(M;\mathbb{Z}[\pi_Y])$ 
are cyclic $\mathbb{Z}[\pi_X]$- and $\mathbb{Z}[\pi_Y]$-modules
(respectively), of projective dimension $\leq1$.
\end{theorem}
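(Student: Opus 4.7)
The plan is to prove the theorem in three stages: first establishing $\pi_X\cong\pi_Y\cong\mathbb{Z}$ and $\chi(X)=\chi(Y)=1$; then identifying the homotopy types; then deriving the module statement. Since $\pi_X$ is abelian (hence nilpotent), the opening of \S3 gives $c.d.X\le 2$, and symmetrically for $Y$. As $\pi_X$ is amenable, Lemma~\ref{L2} forces $\chi(X)\in\{0,1\}$ (and similarly for $Y$); combined with $\chi(X)+\chi(Y)=2$ this pins both Euler characteristics at $1$. The enumeration in \S3 for abelian $\pi_X$ with $\chi(X)=1$ allows $\pi_X\in\{\mathbb{Z}/k\mathbb{Z},\mathbb{Z}\oplus\mathbb{Z}/k\mathbb{Z},\mathbb{Z}^2,\mathbb{Z}^3\}$, while Mayer--Vietoris gives $\pi_X\oplus\pi_Y\cong H_1(M;\mathbb{Z})\cong\mathbb{Z}^2$, so both summands are torsion-free. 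The torsion-free candidates are $\{1,\mathbb{Z},\mathbb{Z}^2,\mathbb{Z}^3\}$; the embedding $\pi_X\hookrightarrow\mathbb{Z}^2$ excludes $\mathbb{Z}^3$, and the pair $(\pi_X,\pi_Y)=(1,\mathbb{Z}^2)$ (and symmetrically $(\mathbb{Z}^2,1)$) is ruled out because $\pi_X=1$ would force $H_2(X;\mathbb{Z})\cong H^1(Y;\mathbb{Z})\cong\mathbb{Z}^2$ and hence $\chi(X)=3$, contradicting $\chi(X)=1$. Thus $\pi_X\cong\pi_Y\cong\mathbb{Z}$.

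With $\pi_1(X)\cong\mathbb{Z}$, $c.d.X\le 2$ and $\chi(X)=1$, Corollary~\ref{2concor} yields $X\simeq S^1\vee S^2$; the same argument handles $Y$. For the module statement, set $\Lambda=\mathbb{Z}[\pi_X]\cong\mathbb{Z}[t,t^{-1}]$. The cellular chain complex of the universal cover of $S^1\vee S^2$ is chain homotopy equivalent to $\Lambda\xrightarrow{0}\Lambda\xrightarrow{t-1}\Lambda$ in degrees $2,1,0$, from which one reads off $H^*(X;\Lambda)=(0,\mathbb{Z},\Lambda)$ in degrees $0,1,2$ (vanishing above). Equivariant Poincar\'e--Lefschetz duality for the orientable $PD_4$-pair $(X,M)$ gives $H_k(X,M;\Lambda)\cong H^{4-k}(X;\Lambda)$, so $H_1(X,M;\Lambda)=0$ and $H_2(X,M;\Lambda)\cong\Lambda$. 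The long exact sequence of the pair therefore reduces to
\[
\Lambda\to\Lambda\to H_1(M;\Lambda)\to 0,
\]
exhibiting $H_1(M;\Lambda)$ as the cokernel of a $\Lambda$-linear endomorphism of $\Lambda$, hence cyclic. Since $\Lambda$ is a commutative integral domain, that endomorphism is either zero or injective (multiplication by some $\alpha\in\Lambda$); in either case one obtains a free $\Lambda$-resolution of length $\le 1$, so $H_1(M;\Lambda)$ has projective dimension $\le 1$. The $\pi_Y$ statement is symmetric.

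The main difficulty I anticipate is in the first stage, namely making the case analysis of $(\pi_X,\pi_Y)$ watertight: several of the candidate pairs must be excluded by Euler-characteristic bookkeeping beyond what \S3 gives verbatim. Once $\pi_X\cong\mathbb{Z}$ is secured, the remaining work is a direct invocation of Corollary~\ref{2concor} together with the routine equivariant chain-level computation on $S^1\vee S^2$.
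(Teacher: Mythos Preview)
Your argument is correct and follows the same route as the paper: pin down $\pi_X\cong\pi_Y\cong\mathbb{Z}$ with $\chi(X)=\chi(Y)=1$, invoke Corollary~\ref{2concor} for the homotopy type, then extract the module statement from the long exact sequence of the pair in $\Lambda$-coefficients together with equivariant Poincar\'e--Lefschetz duality (the paper writes the four-term sequence including $H_2(M;\Lambda)$, but your cokernel version suffices).

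One small wrinkle in your stage~1, which you rightly flag as the delicate part: Lemma~\ref{L2} as stated uses the standing convention $\chi(X)\le 1$, so its conclusion $\chi\in\{0,1\}$ does not transfer verbatim to $Y$; the proof of that lemma only yields $\chi(Y)\ge 0$. The paper sidesteps this by citing the $\beta$-indexed classification of \S3 directly (for $\beta=2$ one lands automatically in the $\chi(X)=1$ case with $\pi_X\cong\mathbb{Z}\oplus\mathbb{Z}/k\mathbb{Z}$, and torsion-freeness of $H_1(M)$ forces $k=1$). Your exclusion argument is fine once you also rule out $\chi(X)=0$: in that case \S3 gives $\pi_X\cong\mathbb{Z}$ with $H_2(X)=0$ or $\pi_X\cong\mathbb{Z}^2$ with $H_2(X)\cong\mathbb{Z}$, and a quick check shows these force $\beta=1$ or $\beta=3$ respectively, not $2$.
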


\begin{proof}
Since $j$ is abelian $\pi_X\cong\pi_Y\cong\mathbb{Z}$ and $\chi(X)=\chi(Y)=1$.
Moreover,
since $j_{X*}$ and $j_{Y*}$ are epimorphisms $c.d.X\leq2$ and $c.d.Y\leq2$,
by  \cite[Theorem 5.1]{Hi17}.
Hence $X\simeq{Y}\simeq{S^1\vee{S^2}}$, by Corollary \ref{2concor}.

As in Theorem \ref{neutral} we consider the homology exact sequences
of the infinite cyclic covers of the pairs $(X,M)$ and $(Y,M)$,
in conjunction with equivariant Poincar\'e-Lefshetz duality.
Since $H_i(X;\mathbb{Z}[\pi_X])=0$  for $i\not=0$ or 2
and $H_2(X;\mathbb{Z}[\pi_X])\cong\mathbb{Z}[\pi_X]$,
we have $H_2(X,M;\mathbb{Z}[\pi_X])\cong
\overline{H^2(X;\mathbb{Z}[\pi_X])}\cong\mathbb{Z}[\pi_X]$ also.
Hence there is an exact sequence 
\[
0\to{H_2(M;\mathbb{Z}[\pi_X])}\to\mathbb{Z}[\pi_X]\to
\mathbb{Z}[\pi_X]\to{H_1(M;\mathbb{Z}[\pi_X])}\to0.
\]
Therefore either $H_1(M;\mathbb{Z}[\pi_X])\cong{H_2(M;\mathbb{Z}[\pi_X])}\cong\mathbb{Z}[\pi_X]$
or $H_2(M;\mathbb{Z}[\pi_X])$ is a cyclic torsion module with a short free resolution, 
and $H_2(M;\mathbb{Z}[\pi_X])=0$.
In either case $H_1(M;\mathbb{Z}[\pi_X])$ is a cyclic module of projective dimension $\leq1$.

A similar argument applies for the pair $(Y,M)$.
\end{proof}

To use Theorem \ref{Z^2} to show that some $M$ has no abelian embedding
we must consider all possible bases for $Hom(\pi,\mathbb{Z})$,
or, equivalently, for $\pi/\pi'$.

\begin{ex}
Let $L$ be the link obtained from the Whitehead link $Wh=5^2_1$ 
by tying a reef knot ($3_1\#\!-\!3_1$) 
in one component. Then no embedding of $M(L)$ is abelian.
\end{ex}

The link group $\pi{L}$ has the presentation
\[
\langle{a,b,c,r,s,t,u,v,w}\mid
{as^{-1}vsa^{-1}=w=brb^{-1}},~cac^{-1}=b,~rcr^{-1}=a,~wcw^{-1}=b,
\]
\[
rvr^{-1}=tut^{-1},~sts^{-1}=u,~usu^{-1}=t,~vsv^{-1}=r\rangle,
\]
and $\pi_1(M(L))\cong\pi{L}/\langle\langle\lambda_a,\lambda_r\rangle\rangle$,
where $\lambda_a=c^{-1}wr^{-1}a$ and $\lambda_r=vu^{-1}s^{-1}t^{-1}rsa^{-1}b$
are the longitudes of $L$.
Let $b=\beta{a}$, $c=\gamma{a}$ and $t=r\tau$. 
Then $w=\gamma{r}$ in $\pi=\pi_1(M(L))$, and so  $\pi$ has the presentation
\[
\langle{a,\beta,\gamma,r,s,\tau,v}\mid
[r,a]=\gamma^{-1}\beta{r}\beta^{-1}r^{-1}=r\gamma^{-1}r^{-1},~
\gamma{a}\gamma^{-1}a^{-1}=\beta,~sr\tau{s}=r\tau{sr\tau},
\]
\[
as^{-1}vsa^{-1}=\gamma{r},~vs=rv,~
v=\tau{s}r\tau{s^{-1}}\tau^{-1}=\beta^{-1}s^{-1}\tau{s^2}r\tau{s^{-1}}\rangle.
\]
Now let $s=\sigma{r}$ and $v=\xi{r}$.
Then $\pi/\pi''$ has the metabelian presentation
\[
\langle{a,\beta,\gamma,r,\sigma,\tau,\xi}\mid
[r,a]=\gamma^{-1}\beta.{r}\beta^{-1}r^{-1}=r\gamma^{-1}r^{-1},~\gamma.{a}\gamma^{-1}a^{-1}=\beta,~
\]
\[
r^{-1}\sigma{r}.r\tau\sigma{r^{-1}}=\tau\sigma.{r^2}\tau{r^{-2}},
~ar^{-1}\sigma^{-1}\xi{ra^{-1}}.a\sigma{a^{-1}}=\gamma.r\gamma^{-1}r^{-1},~\xi={r}\xi\sigma^{-1}r^{-1},
\]
\[
\xi=\tau\sigma.{r^2}\tau{r^{-2}}.r\sigma^{-1}\tau^{-1}r^{-1}=
\beta^{-1}.r^{-1}\tau{r}.\sigma.{r^2}\tau{r^{-2}}.r\sigma^{-1}r^{-1},~[[\,,\,],[\,,\,]]=1\rangle,
\]
in which $\beta,\gamma,\sigma,\tau$ and $\xi$ represent elements of $\pi'$, 
which is the normal closure of the images of these generators.
The first relation expresses the commutator $[r,a]$ 
as a product of conjugates of these generators.
Using the third relation to eliminate $\beta$, 
we see that $\pi'/\pi''$ is generated as a module over 
$\mathbb{Z}[\pi/\pi']=\mathbb{Z}[a^\pm,r^\pm]$ by the images of 
$\gamma,\sigma,\tau$ and $\xi$, with the relations
\[
(1-r)[\gamma]=0,
\]
\[
(r^2-r+1)[\sigma]=r(r^2-r+1)[\tau]=0,
\]
\[
[\xi]=(1-r)[\sigma],
\]
and 
\[
2[\sigma]+2[\tau]=(a-1)[\gamma].
\]
If we extend coefficients to the rationals to simplify the analysis, 
we see that $P=H_1(M;\mathbb{Q}[\pi/\pi'])=\mathbb{Q}\otimes\pi'/\pi''$
is generated by $[\gamma]$ and $[\tau]$,
with the relations 
\[(1-r)[\gamma]=(r^2-r+1)[\tau]=0.
\]
Let $\{x,y\}$ be a basis for $\pi/\pi'$. Then $x=a^mr^n$ and $y=a^pr^q$,
where $|mq-np|=1$.
Let $\{x^*,y^*\}$ be the Kronecker dual basis for $Hom(\pi,\mathbb{Z})$,
and let $M_x$ and $M_y$ be the infinite cyclic covering spaces corresponding to
$\mathrm{Ker}(x^*)$ and $\mathrm{Ker}(y^*)$, respectively.
Then $H_1(M_x;\mathbb{Q})\cong{(P/(y-1)P\oplus\langle{y}\rangle)/(x.y=y+[x,y])}$.
If this module is cyclic as a module over the PID $\mathbb{Q}[x,x^{-1}]$ then so is 
the submodule 
\[
P/(y-1)P\cong\mathbb{Q}[\pi/\pi']/(r^2-r+1,y-1)\oplus\mathbb{Q}[\pi/\pi']/(r-1,y-1).
\]
On substituting $y=a^pr^q$ we find that this is so if and only if $p=0$ and $q=\pm1$.
But then $x=a^{\pm1}$, and a similar calculation show that 
$H_1(M_y;\mathbb{Q})$ is not cyclic  as a $\mathbb{Q}[y,y^{-1}]$-module.
Thus no basis for $\pi/\pi'$ satisfies the criterion of Theorem \ref{Z^2}, 
and $M$ has no abelian embedding.

We shall assume henceforth that $M=M(L)$, 
where $L$ is a 2-component link with
components slice knots and linking number $\ell=0$.
Let $x$ and $y$ be the images of the meridians of $L$ in $\pi$, 
and let $D_x$ and $D_y$ be slice discs for the components of $L$, 
embedded on opposite sides of the equator $S^3\subset{S^4}$.
Then the complementary regions for the embedding $j_L$ determined by $L$ are
$X_L=(D^4\setminus{N(D_x)})\cup{D_y\times{D^2}}$ and
$Y_L=(D^4\setminus{N(D_y)})\cup{D_x\times{D^2}}$.
The kernels of the natural homomorphisms from $\pi$ to $\pi_{X_L}$ 
and $\pi_{Y_L}$ are the normal closures of $y$ and $x$, respectively.
If one of the components of $L$ is unknotted then 
the corresponding complementary region is a handlebody of the form $S^1\times{D^3}\cup{h^2}$.
Inverting the handle structure gives a handlebody structure
$M\times[0,1]\cup{h^2}\cup{h^3}\cup{h^4}$.

If the components of $L$ are unknotted then $j_L$ is abelian,
and $\pi_X\cong\pi_Y\cong\mathbb{Z}$.

If $L$ is interchangeable there is a self-homeomorphism of $M(L)$ 
which swaps the meridians.
Hence $X_L$ is homeomorphic to $Y_L$, and $S^4$ is a twisted double.

The next result  has fairly strong hypotheses, 
but we shall give an example after the theorem showing that some such hypotheses are necessary.

\begin{theorem}
\label{Z^2rel}
Let $M$ be a $3$-manifold with fundamental group $\pi$
such that ${\pi/\pi'\cong\mathbb{Z}^2}$\!,
and suppose that $j_1$ and $j_2$ are abelian embeddings of $M$ in $S^4$.
If $(X_1,M)\simeq(X_2,M)$ and $(Y_1,M)\simeq(Y_2,M)$ then 
$j_1$ and $j_2$ are equivalent.
\end{theorem}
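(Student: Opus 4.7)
The plan is to adapt the argument for Theorem \ref{homhandle}, exploiting that the required pair equivalences are now supplied by hypothesis rather than having to be constructed (as was needed for the $Y$-side there). By Theorem \ref{Z^2} we have $\pi_{X_i}\cong\pi_{Y_i}\cong\mathbb{Z}$ and $X_i\simeq Y_i\simeq S^1\vee S^2$ for $i=1,2$, and on both sides the structure sets $\mathcal{S}_{TOP}(X_2,\partial X_2)$ and $\mathcal{S}_{TOP}(Y_2,\partial Y_2)$ are fixed under the action of $L_5(\mathbb{Z})$, by Wall--Shaneson together with the topological $E_8$-manifold (\cite[Theorem 6.7]{Hi}).

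First I would arrange the given pair equivalences $F\colon (X_1,M)\to(X_2,M)$ and $G\colon (Y_1,M)\to(Y_2,M)$ to restrict to the identity on $M$. The restrictions $F|_M$ and $G|_M$ are self-homotopy-equivalences of the 3-manifold $M$, hence each is homotopic to a self-homeomorphism (by Waldhausen's theorem when $M$ is Haken, and by geometrization in general). Absorbing $F|_M$ into the equivalence relation by composing $j_1$ with its homeomorphic realisation, we may take $F|_M=\mathrm{id}_M$. The subtlety is to simultaneously arrange $G|_M=\mathrm{id}_M$: this requires post-composing $G$ with a self-pair-equivalence of $(Y_2,M)$ whose restriction to $M$ is the inverse of the new $G|_M$.

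Having aligned the restrictions, each of $F$ and $G$ is homotopic rel $M$ to a homeomorphism of pairs $F'$, $G'$, by the action-triviality of $L_5(\mathbb{Z})$ noted above. The union $\psi=F'\cup_M G'$ is then a self-homeomorphism of $S^4=X_1\cup_M Y_1\to X_2\cup_M Y_2$ with $\psi j_1=j_2$, giving the desired equivalence.

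The hard part is the alignment step in the second paragraph. Unlike in Theorem \ref{homhandle}, where Lemma \ref{asph} delivers a pair equivalence extending $\mathrm{id}_M$ on the aspherical $S^1$-side, neither complementary region here is aspherical; indeed the discussion following Lemma \ref{asph} flags exactly this point, identifying an obstruction in $\mathbb{Z}\otimes_{\mathbb{Z}[\pi_Y]}\Gamma_W(\mathbb{Z}[\pi_Y])$ to extending an arbitrary self-homotopy-equivalence of $M$ across $Y$. I would expect to handle this by using both pair equivalences in tandem, together with the cyclic-module structure on $H_1(M;\mathbb{Z}[\pi_X])$ and $H_1(M;\mathbb{Z}[\pi_Y])$ from Theorem \ref{Z^2}, to show that the composite $F|_M\circ (G|_M)^{-1}$ lies in the product of the two subgroups of the homotopy-equivalence group of $M$ arising from self-pair-equivalences of $(X_2,M)$ and $(Y_2,M)$.
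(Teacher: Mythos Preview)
Your core argument---upgrade each given pair equivalence to a homeomorphism \emph{rel} boundary using the triviality of the $L_5(\mathbb{Z})$-action on the structure sets, then glue---is exactly the paper's proof, which is two sentences long: it asserts that one obtains homeomorphisms $F:X_1\to X_2$ and $G:Y_1\to Y_2$ ``which agree on $M$'', and sets $h=F\cup G$.

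The divergence is your second and fourth paragraphs, where you worry about aligning $F|_M$ with $G|_M$. The paper does not address this at all; it simply writes that $F$ and $G$ agree on $M$. The subsequent application in the paper (to $M(5^2_1)$, at the end of the same section) explicitly invokes pair equivalences ``\emph{rel} $M$'', so the intended reading of the hypothesis $(X_i,M)\simeq(X_2,M)$ is almost certainly pair equivalences restricting to $\mathrm{id}_M$. Under that reading the alignment is automatic, and your whole discussion of absorbing $F|_M$ and then trying to extend $\phi_Y^{-1}\phi_X$ over $Y_2$ is unnecessary.

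If instead one reads the hypothesis literally as an arbitrary homotopy equivalence of pairs, then your concern is legitimate---but your final paragraph is not a proof. Showing that the residual self-equivalence of $M$ extends across $Y_2$ (equivalently, that $F|_M\circ(G|_M)^{-1}$ lies in the product of the two extension subgroups you describe) is precisely the extension problem the paper flags as unresolved in the discussion following Lemma~\ref{asph}; the obstruction in $\mathbb{Z}\otimes_{\mathbb{Z}[\pi_Y]}\Gamma_W(\mathbb{Z}[\pi_Y])$ is real, and nothing in Theorem~\ref{Z^2} (cyclicity of $H_1(M;\mathbb{Z}[\pi_X])$, etc.) is known to kill it. So under this stronger reading your argument has a genuine gap, and the paper does not claim to fill it either.
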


\begin{proof}
As in Theorem \ref{homhandle}, since $L_5(\mathbb{Z})$ acts trivially on the structure sets
there are homeomorphisms $F:X_1\to{X_2}$ and $G:Y_1\to{Y_2}$
which agree on $M$.
The union $h=F\cup{G}$ is a homeomorphism such that $hj_1=j_2$.
\end{proof}

To find examples where the complementary regions are {\it not\/} 
homeomorphic we should start with a link $L$ which is not interchangeable.
The simplest condition that ensures that a link with unknotted components 
is not interchangeable is asymmetry of the Alexander polynomial,
and the smallest such link with linking number 0 is $8^2_{13}$.
Since $\pi=\pi_1(M)$ is a quotient of $\pi{L}$, 
there remains something to be checked.

\begin{ex}
The complementary regions of the embedding of  $M(8^2_{13})$
determined by  the link $L=8^2_{13}$ are not homeomorphic
(although they are homotopy equivalent).
\end{ex}

Let $M=M(L)$.
The link group $\pi{L}=\pi8^2_{13}$ has the presentation
\[
\langle{s,t,u,v,w,x,y,z}\mid{yv=wy}, ~zx=wz,~ty=zt,~uy=zu,~sv=us,~vs=xv,
\]
\[
wu=tw,~xs=tx\rangle
\]
and the longitudes are $u^{-1}t$ and $x^2z^{-1}ys^{-1}w^{-1}xv^{-1}$.
Hence $\pi=\pi_1(M)$ has the presentation
\[
\langle{s,t,v,w,x,y}\mid{yv=wy}, ~tyt^{-1}x=wtyt^{-1}\!,~x^2ty^{-1}t^{-1}ys^{-1}w^{-1}xv^{-1}=1,
\]
\[sv=ts,~vs=xv,~wt=tw,~xs=tx\rangle.
\]
Setting $s=x\alpha$, $t=x\beta$, $v=x\gamma$ and $w=x\delta$, we obtain the presentation
\[
\langle{\alpha,\beta,\gamma,\delta,x,y}\mid
{[x,y]=xy\gamma(xy)^{-1}}\!.x\delta{x^{-1}},~
\beta.{y}\beta{y^{-1}}=\delta.{x}\beta{x}^{-1}\!.xy\beta^{-1}(xy)^{-1}\!.[x,y],
\]
\[
x^2\beta{x^{-2}}\!.x^2y^{-1}\beta^{-1}yx^{-2}=\gamma\delta.x\alpha{x^{-1}}.xy^{-1}[x,y]^{-1}yx^{-1}~
\]
\[
\delta{x}\beta=\beta{x}\delta,~\alpha{x}\gamma=\beta{x}\alpha,~\gamma{x}\alpha=x\gamma,~x\alpha=\beta{x}
\rangle
\]
in which $\alpha,\beta,\gamma$ and $\delta$ represent elements of $\pi'$, 
which is the normal closure of the images of these generators.
The subquotient $\pi'/\pi''$ is generated as a module over 
$\mathbb{Z}[\pi/\pi']\cong\Lambda_2=\mathbb{Z}[x^\pm,y^\pm]$
by the images of $\gamma$ and $\delta$,
with the relations 
\[
(x+1)(y-1)(x-1)[\gamma]=xy[\gamma]-x[\delta],
\]
\[(x-1)^2[\gamma]=(x-1)[\delta],
\]
and
\[
(x^2-x+1)[\gamma]=0,
\]
since $[\alpha]=x^{-1}(x-1)[\gamma]$ and $[\beta]=(x-1)[\gamma]$.
Adding the first two relations and rearranging gives
\[
[\delta]=-((x^2-x+1)y+2-2x) [\gamma]=2(x-1)[\gamma].
\]
Hence $\pi'/\pi''\cong\Lambda_2/( x^2-x+1,3(x-1)^2)=\Lambda_2/(x^2-x+1,3)$.
As a module over the subring $\mathbb{Z}[x,x^{-1}]$,
this is infinitely generated, but as a module over 
$\mathbb{Z}[y,y^{-1}]$ it has two generators.
Therefore there is no automorphism of $\pi$ which  induces an isomorphism 
$\mathrm{Ker}(\pi_1(j_X))=\pi'\rtimes\langle{x}\rangle
\cong\mathrm{Ker}(\pi_1(j_Y))=\pi'\rtimes\langle{y}\rangle$.
Hence $(X,M)$ and $(Y,M)$ are not homotopy equivalent as pairs, 
although $X\simeq{Y}$.

Does $M$ have any other abelian embeddings 
with neither complementary component homeomorphic to $X$,
perhaps corresponding to distinct link presentations?
Is this 3-manifold homeomorphic to a 3-manifold $M(\tilde{L})$ 
via a homeomorphism which does not preserve the meridians?

There is just one 3-manifold with $\pi$ elementary amenable and
$\beta=2$ which embeds in $S^4$ \cite{CH}.
This is the $\mathbb{N}il^3$-manifold $M=M(1;(1,1))$,
and $\pi=\pi_1(M)$ is the free nilpotent group of class 2 on 2-generators:
$\pi\cong{F(2)}/F(2)_{[3]}$.
This manifold may be obtained by 0-framed surgery on the Whitehead link $Wh=5^2_1$,
and the corresponding embedding is abelian.

All epimorphisms from $F(2)/F(2)_{[3]}$ to $\mathbb{Z}$ are equivalent 
under composition with automorphisms,
and each automorphism of $F(2)/F(2)_{[3]}$ is induced by a self-diffeomorphism of $M$.
If $j$ is an abelian embedding such that 
$(X,M)$ and $(Y,M)$ are homotopy
equivalent ({\it rel\/} $M$) to $(X_{Wh},M)$,
then $j$ is equivalent to $j_{Wh}$,
and the two complementary regions are homeomorphic.
However,
since $X$ and $Y$ are not aspherical, Lemma \ref{asph} does not apply
to provide a homotopy equivalence of pairs.
Is $j_{Wh}$ essentially unique?

\section{the higher rank cases}

Theorems \ref{neutral} and \ref{Z^2} have analogues when $\beta=3$, 4 or 6.

\begin{theorem}
\label{b=3}
Let $M$ be a $3$-manifold with fundamental group $\pi$ 
such that $\pi/\pi'\cong\mathbb{Z}^3$.
If $j:M\to{S^4}$ is an abelian embedding then $X\simeq{T}$ 
and ${Y}\simeq{S^1\vee2{S^2}}$,
while  $H_1(M;\mathbb{Z}[\pi_X])\cong\mathbb{Z}$ and 
$H_1(M;\mathbb{Z}[\pi_Y])$ is a torsion $\mathbb{Z}[\pi_Y]$-module
of projective dimension $1$ and which can be generated by two elements.
The component $X$ is determined up to homeomorphism by its boundary $M$,
while $Y$ is determined by the homotopy type of the pair $(Y,M)$.
\end{theorem}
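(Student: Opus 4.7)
The plan is to establish, in order, the identifications of $\pi_X,\pi_Y$ and of the homotopy types of $X$ and $Y$, the two computations of $H_1(M;\,\cdot\,)$, and the two uniqueness statements.

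Since $j$ is abelian, $\pi_X\oplus\pi_Y\cong H_1(M;\mathbb{Z})=\mathbb{Z}^3$ by Mayer--Vietoris, and both summands are finitely generated abelian. The classification in \S3, together with $\chi(X)+\chi(Y)=2$, $\chi(X)\leq\chi(Y)$, and the observation that $\pi_Y=1$ would make $Y$ contractible (contradicting $\beta_1(M)=3$), forces $\pi_X\cong\mathbb{Z}^2$, $\pi_Y\cong\mathbb{Z}$, $\chi(X)=0$ and $\chi(Y)=2$. As in Theorem \ref{Z^2}, $j_{X*}$ and $j_{Y*}$ are each surjective (being so on abelianizations), so $c.d.X\leq2$ and $c.d.Y\leq2$ by \cite[Theorem 5.1]{Hi17}. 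Lemma \ref{L2} then forces $X$ to be aspherical, so $X\simeq K(\mathbb{Z}^2,1)=T$, while Corollary \ref{2concor} gives $Y\simeq S^1\vee 2S^2$.

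The homology calculations mimic those of Theorem \ref{Z^2}. For $(X,M)$, since $\mathbb{Z}^2$ is an orientable $PD_2$ group with dualizing module $\mathbb{Z}$, the only non-vanishing $H^q(X;\mathbb{Z}[\pi_X])$ is $H^2\cong\mathbb{Z}$; equivariant Poincar\'e--Lefschetz duality then gives $H_2(X,M;\mathbb{Z}[\pi_X])\cong\mathbb{Z}$ with vanishing relative homology in other degrees, so the long exact sequence of the pair collapses to $H_1(M;\mathbb{Z}[\pi_X])\cong\mathbb{Z}$. For $(Y,M)$, direct computation from the cellular chain complex of $S^1\vee 2S^2$ yields $H^1(Y;\mathbb{Z}[\pi_Y])\cong\mathbb{Z}$ and $H^2(Y;\mathbb{Z}[\pi_Y])\cong\mathbb{Z}[\pi_Y]^2$; duality converts these to $H_3(Y,M;\mathbb{Z}[\pi_Y])\cong\mathbb{Z}$ and $H_2(Y,M;\mathbb{Z}[\pi_Y])\cong\mathbb{Z}[\pi_Y]^2$. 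The long exact sequence of the pair then becomes
\[
0\to\ker(f)\to\mathbb{Z}[\pi_Y]^2\xrightarrow{f}\mathbb{Z}[\pi_Y]^2\to H_1(M;\mathbb{Z}[\pi_Y])\to 0,
\]
exhibiting $H_1(M;\mathbb{Z}[\pi_Y])$ as a two-generated quotient. To obtain projective dimension at most $1$ and the torsion claim, I would show that $f$ is injective: a Wang-sequence computation with coefficients $\mathbb{Q}[t^{\pm1}]$, applied to the infinite cyclic cover of $M$ determined by the surjection $\pi\twoheadrightarrow\pi_Y$, should show that $H_1(M;\mathbb{Q}[t^{\pm1}])$ is $\mathbb{Q}[t^{\pm1}]$-torsion, and then $\ker(f)$, being a torsion-free submodule of a free module with vanishing $\mathbb{Q}(t)$-rank, must vanish.

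For uniqueness I would adapt the proofs of Theorems \ref{homhandle} and \ref{Z^2rel}. Given two abelian embeddings with complementary components $X_1,X_2$, asphericity of the $X_i$ allows Lemma \ref{asph} to produce a homotopy equivalence of pairs $(X_1,M)\simeq(X_2,M)$, once a self-homeomorphism of $M$ has been used to identify the kernels of the two surjections $\pi\twoheadrightarrow\mathbb{Z}^2$; the $s$-cobordism theorem, with $L_5(\mathbb{Z}^2)$ acting trivially on the relevant structure set \cite[Theorem 6.7]{Hi}, then upgrades the pair equivalence to a homeomorphism. For $Y$, a pair homotopy equivalence $(Y_1,M)\simeq(Y_2,M)$ is homotopic {\it rel\/} $M$ to a homeomorphism by the $s$-cobordism theorem with $L_5(\mathbb{Z})$ acting trivially. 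The main obstacle is the uniqueness of $X$ from $M$ alone: one must show that every abelian embedding gives (up to an automorphism of $\mathbb{Z}^2$) an equivalent epimorphism $\pi\twoheadrightarrow\mathbb{Z}^2$ after composition with some self-homeomorphism of $M$, which amounts to realizing the required change of rank-$2$ direct summand of $H_1(M;\mathbb{Z})$ by a self-homeomorphism of $M$.
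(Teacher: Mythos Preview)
Your identification of $\pi_X\cong\mathbb{Z}^2$, $\pi_Y\cong\mathbb{Z}$, $X\simeq T$ and $Y\simeq S^1\vee2S^2$ is essentially the paper's argument (the paper reaches $X\simeq T$ via the classifying map and Theorem~\ref{2con} rather than Lemma~\ref{L2}, but these are interchangeable here), and your $X$-side computation $H_1(M;\mathbb{Z}[\pi_X])\cong\mathbb{Z}$ matches the paper.

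The genuine gap is your torsion argument for $H_1(M;\mathbb{Z}[\pi_Y])$. A Wang sequence for the infinite cyclic cover only relates $H_*(M;\mathbb{Q})$ to the invariants and coinvariants of the covering translation on $H_*(\widetilde M;\mathbb{Q})$; it gives no control over the $\mathbb{Q}(t)$-rank of $H_1(M;\mathbb{Q}[t^{\pm1}])$, and for a general epimorphism from a $3$-manifold group with $\beta_1=3$ onto $\mathbb{Z}$ there is no reason for this rank to vanish. The paper's key move, which you are missing, is to feed the $X$-side result back into the $Y$-side. Set $A=\pi'/\pi''$, a finitely generated module over the noetherian ring $\mathbb{Z}[\pi/\pi']$, and choose a basis $\{x,y,z\}$ for $\pi/\pi'$ with $j_{X*}(y)=0$ and $j_{Y*}(x)=j_{Y*}(z)=0$. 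The isomorphism $H_1(M;\mathbb{Z}[\pi_X])\cong\mathbb{Z}$ forces $A/(y-1)A=0$, i.e.\ $A=(y-1)A$. Now $H_1(M;\mathbb{Z}[\pi_Y])$ is an extension of $\mathbb{Z}^2$ by $A/(x-1,z-1)A$; the latter is a finitely generated $\mathbb{Z}[y^{\pm1}]$-module on which $y-1$ acts surjectively, hence has no free summand, while $\mathbb{Z}^2$ is annihilated by $y-1$. Thus $H_1(M;\mathbb{Z}[\pi_Y])$ is torsion, the map $f$ in your displayed sequence is injective, and the projective-dimension bound follows.

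On uniqueness: the paper simply invokes Lemma~\ref{asph} to conclude that the homotopy type of $(X,M)$ is determined by $M$, and then uses that $L_5(\mathbb{Z}^2)$ and $L_5(\mathbb{Z})$ act trivially on the structure sets (\cite[Theorem~6.7]{Hi}) to pass to homeomorphism. Your additional worry about realizing a change of rank-$2$ direct summand of $H_1(M;\mathbb{Z})$ by a self-homeomorphism of $M$ is not part of the paper's argument.
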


\begin{proof}
The classifying map $c_X:X\to{K(\pi_X,1)}\simeq{T}$ is a homotopy equivalence,
by Theorem \ref{2con},  since $c.d.X=c.d.T=2$ and $\chi(X)=\chi(T)=0$.
The equivalence ${Y}\simeq{S^1\vee2{S^2}}$ follows from Corollary \ref{2concor},
since $\pi_Y\cong\mathbb{Z}$ and $\chi(Y)=2$.

Since $H_2(X;\mathbb{Z}[\pi_X])=0$,
the exact sequence of homology for the pair $(X,M)$ 
with coefficients $\mathbb{Z}[\pi_X]$ reduces to an isomorphism
$H_1(M;\mathbb{Z}[\pi_X])\cong
\overline{H^2(X;\mathbb{Z}[\pi_X])}\cong\mathbb{Z}$.

Similarly, there is an exact sequence
\[
0\to\mathbb{Z}\to{H_2(M;\mathbb{Z}[\pi_Y])}\to\mathbb{Z}[\pi_Y]^2\to\mathbb{Z}[\pi_Y]^2\to
{H_1(M;\mathbb{Z}[\pi_Y])}\to0,
\]
since $H_2(Y;\mathbb{Z}[\pi_Y])\cong\mathbb{Z}[\pi_Y]^2$ and 
$H_2(Y,M;\mathbb{Z}[\pi_Y])\cong\overline{H^2(Y;\mathbb{Z}[\pi_Y]) }$.
Let $A=\pi'/\pi''$, considered as a $\mathbb{Z}[\pi/\pi']$-module.
Then $A$ is finitely generated as a module, since $\mathbb{Z}[\pi/\pi']$ is a noetherian ring.
Let $\{x,y,z\}$ be a basis for $\pi/\pi'$ such that $j_{X*}(y)=0$ and $j_{Y*}(x)=j_{Y*}(z)=0$.
Then $H_1(M;\mathbb{Z}[\pi_X])\cong\mathbb{Z}$ is an extension of $\mathbb{Z}$ by $A/(y-1)A$,
and so $A=(y-1)A$.
Similarly, $H_1(M;\mathbb{Z}[\pi_Y])$ is an extension of $\mathbb{Z}^2$ by $A/(x-1,z-1)A$.
Together these observations imply that $H_1(M;\mathbb{Z}[\pi_Y])$ is a torsion $\mathbb{Z}[\pi_Y]$-module,
and so the fourth homomorphism in the above sequence is a monomorphism.
Thus $H_1(M;\mathbb{Z}[\pi_Y])$ is a torsion $\mathbb{Z}[\pi_Y]$-module
with projective dimension $\leq1$,
and is clearly generated by two elements.
(Note also that a torsion $\mathbb{Z}[\pi_Y]$-module of projective dimension 0 is 0.)

Since $X$ is aspherical, Lemma \ref{asph} applies, 
and so the homotopy type of the pair $(X,M)$ is determined by $M$.
The final assertion follows (as in Theorem \ref{Z^2rel}),
since $L_5(\mathbb{Z}^2)$ and 
$L_5(\mathbb{Z})$ act trivially on the structure sets 
$\mathcal{S}^{TOP}(X,\partial{X})$ and $\mathcal{S}^{TOP}(Y,\partial{Y})$,
by  \cite[Theorem 6.7]{Hi}.
\end{proof}

The link $L=9^3_{21}$ has an unique partition as a bipartedly slice link,
and for the corresponding embedding $\pi_{X_L}\cong{F(2)}$
and $\pi_{Y_L}\cong\mathbb{Z}$.
Then $M=M(9^3_{21})\cong(S^2\times{S^1})\#M(5^2_1)$,
so $\pi\cong\mathbb{Z}*F(2)/F(2)_{[3]}$,
with presentation $\langle{x,y,z}\mid[x,y]\leftrightharpoons{x,y}\rangle$.
It is not hard to show that the kernel of any epimorphism
$\phi:\pi\to\langle{t}\rangle\cong\mathbb{Z}$ 
has rank $\geq1$ as a $\mathbb{Z}[t,t^{-1}]$-module.
Hence $M$ has no abelian embedding, by Theorem \ref{b=3}.

The 3-torus $T^3=\mathbb{R}^3/\mathbb{Z}^3$ has an abelian embedding,
as the boundary of a regular neighbourhood of an unknotted embedding of $T$ in $S^4$.
This manifold may be obtained by 0-framed surgery on the Borromean rings $Bo=6^3_2$,
and also  on $9^3_{18}$. 
The three bipartite partitions of $Bo$ lead to equivalent embeddings.
(However these are clearly not isotopic!)
The link $9^3_{18}$ has two bipartedly slice partitions (both bipartedly trivial).
Any such embedding of $T^3$ has 
$X\cong{T}\times{D^2}$ and $Y\simeq{S^1}\vee2S^2$.
Does $T^3$ have an essentially unique abelian embedding?

If $M$ is Seifert fibred and $\pi/\pi'\cong\mathbb{Z}^3$
then it has generalized Euler invariant $\varepsilon=0$, 
and so is a mapping torus $T_g\rtimes_\theta{S^1}$, 
with orientable base orbifold and monodromy $\theta$ of finite order.
Are there any such manifolds other than the 3-torus 
which have abelian embeddings?

Suppose that $\beta=3$ and $M$ has an embedding $j$ 
such that $H_1(Y;\mathbb{Z})=0$.
If $f:\pi\to\mathbb{Z}^2$ is an epimorphism  with kernel $\kappa$ 
and $R=\mathbb{Z}[\pi/\kappa]_{f(S)}$ then $H_1(M;R)$ has rank 2, 
by Lemma \ref{completion},
and so the condition of Theorem \ref{b=3} does not hold.
Therefore no such 3-manifold can also have an abelian embedding. 

\begin{theorem}
\label{b=4}
Let $M$ be a $3$-manifold with fundamental group $\pi$ 
such that $\pi/\pi'\cong\mathbb{Z}^4$.
If $j:M\to{S^4}$ is an abelian embedding
then $X\simeq{Y}\simeq{T\vee{S^2}}$.
Hence  $H_1(M;\mathbb{Z}[\pi_X])$ is a quotient of
$\mathbb{Z}[\pi_X]\oplus\mathbb{Z}$  by a cyclic submodule
(and similarly for $H_1(M;\mathbb{Z}[\pi_Y])$).
The components $X$ and $Y$ are determined by the homotopy types 
of the pairs $(X,M)$ and $(Y,M)$, respectively.
\end{theorem}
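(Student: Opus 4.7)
The plan is to follow the pattern established in Theorems \ref{Z^2} and \ref{b=3}. From the classification of abelian groups that can occur as $\pi_X$ and $\pi_Y$ (summarised at the end of \S3), the hypotheses $\beta=4$ and $j$ abelian force $\pi_X\cong\pi_Y\cong\mathbb{Z}^2$ and $\chi(X)=\chi(Y)=1$. Since $\pi_X$ is nilpotent, Lemma \ref{minimal} applies and $j_{X*}$ is onto, so $c.d.X\leq 2$ by \cite[Theorem 5.1]{Hi17}; likewise for $Y$.

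The key step is to identify the homotopy type of $X$. Write $\Lambda=\mathbb{Z}[\pi_X]$. The condition $c.d.X\leq 2$ means $C_*(\widetilde X)$ is chain equivalent to a finite free $\Lambda$-complex $D_*$ of length $\leq 2$ with $\chi(D_*)=\chi(X)=1$. Comparing $D_*$ with the standard length-2 free Koszul resolution of $\mathbb{Z}$ over $\Lambda$ (of Euler characteristic $0$) by iterated use of Schanuel's Lemma shows that $\pi_2(X)=H_2(D_*)$ is stably free of rank $1$. Suslin's theorem for Laurent polynomial rings then implies $\pi_2(X)\cong\Lambda$. Choose a classifying map $\phi:T\to X$ inducing the identity on $\pi_1$ (possible since $T$ is aspherical) and a map $\sigma:S^2\to X$ representing a $\Lambda$-generator of $\pi_2(X)$. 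The resulting map $\phi\vee\sigma:T\vee S^2\to X$ is 2-connected; since both source and target have $c.d.\leq 2$ and Euler characteristic $1$, Theorem \ref{2con} upgrades it to a homotopy equivalence. The same argument gives $Y\simeq T\vee S^2$.

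The homological description follows from the long exact sequence of the pair $(X,M)$ with $\Lambda$-coefficients: vanishing of $H_1(X;\Lambda)$ yields a surjection $H_2(X,M;\Lambda)\twoheadrightarrow H_1(M;\Lambda)$ whose kernel is the image of $\pi_2(X)\cong\Lambda$, hence cyclic. Equivariant Poincar\'e--Lefschetz duality identifies $H_2(X,M;\Lambda)\cong\overline{H^2(X;\Lambda)}$, and a direct cellular computation on the model $T\vee S^2$ (equivalently, the universal coefficient spectral sequence combined with $H^2(\mathbb{Z}^2;\mathbb{Z}[\mathbb{Z}^2])\cong\mathbb{Z}$) yields $H^2(X;\Lambda)\cong\Lambda\oplus\mathbb{Z}$, giving the claimed quotient description of $H_1(M;\Lambda)$; the $Y$-case is symmetric.

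For the final assertion, any homotopy equivalence of pairs $(X_1,M)\simeq(X_2,M)$ is homotopic rel $M$ to a homeomorphism, since $L_5(\mathbb{Z}^2)$ acts trivially on the topological structure set $\mathcal{S}^{TOP}(X,\partial X)$ by the Wall--Shaneson machinery together with the $E_8$-manifold, as in \cite[Theorem 6.7]{Hi} (the same tool already used in Theorems \ref{homhandle} and \ref{b=3}). The main obstacle I anticipate is the free-versus-stably-free step for $\pi_2(X)$: reducing to $\pi_2(X)\cong\Lambda$ uses Suslin's theorem for $\mathbb{Z}[\mathbb{Z}^2]$ as an essential black box, without which one would obtain $X\simeq T\vee S^2$ only up to a possibly nontrivial stably free summand. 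The remainder is duality and surgery bookkeeping of the kind already established in the previous sections.
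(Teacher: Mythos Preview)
Your argument is correct and matches the paper's approach closely; the paper is merely terser, writing ``as in Theorem~\ref{Z^2}'' for the identification $\pi_2(X)\cong\mathbb{Z}[\pi_X]$ and leaving implicit the Suslin/Swan input (stably free $\Rightarrow$ free over $\mathbb{Z}[\mathbb{Z}^2]$) that you rightly isolate as the one nontrivial black box. One small correction of phrasing: the map $\phi:T\to X$ is not a ``classifying map'' (that would run $X\to T$), and its existence follows not from $T$ being aspherical but from the fact that the $2$-cell of $T$ is attached along a commutator, which is null-homotopic in $X$ since $\pi_X$ is abelian---this is exactly how the paper builds the comparison map, starting on $T^{[1]}\vee S^2$ and then extending.
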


\begin{proof}
As in Theorem \ref{Z^2}, generators
for $\pi_X\cong\mathbb{Z}^2$ and $\pi_2(X)\cong\mathbb{Z}[\pi_X]$ 
determine a map from $T^{[1]}\vee{S^2}$ to $X$.
This extends to a 2-connected map from $T\vee{S^2}$ to $X$,
which is a homotopy equivalence by Theorem \ref{2con}.
Hence $X\simeq{T\vee{S^2}}$.

The second assertion follows from the exact sequence 
of homology for $(X,M)$ with coefficients $\mathbb{Z}[\pi_X]$,
since $H^2(X;\mathbb{Z}[\pi_X])\cong{\mathbb{Z}[\pi_X]\oplus\mathbb{Z}}$.
Parallel arguments apply for $Y$ and $H_1(M;\mathbb{Z}[\pi_Y])$.

The final assertion follows as in Theorems \ref{Z^2rel} and \ref{b=3}.
\end{proof}

The argument below for the final case ($\beta=6$) is adapted from Wall's proof 
that the $(n-1)$-skeleton of a $PD_n$-complex is essentially unique 
 \cite[Theorem 2.4]{Wa}.

\begin{theorem}
Let $M$ be a $3$-manifold with fundamental group $\pi$ 
such that $\pi/\pi'\cong\mathbb{Z}^6$.
If $j:M\to{S^4}$ is an abelian embedding
then $X\simeq{Y}\simeq{T^{3[2]}}$, 
the $2$-skeleton of the $3$-torus $T^3$,
while  $H_1(M;\mathbb{Z}[\pi_X])$ and $H_1(M;\mathbb{Z}[\pi_Y])$ 
are cyclic $\mathbb{Z}[\pi_X]$- and $\mathbb{Z}[\pi_Y]$-modules
(respectively), of projective dimension $\leq1$.
\end{theorem}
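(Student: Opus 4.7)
The plan is to follow the pattern of Theorems \ref{Z^2}, \ref{b=3}, and \ref{b=4}, together with the adaptation of Wall's essentially-unique-$(n-1)$-skeleton result flagged in the preamble. First, since $j$ is abelian and $\beta=6$, the classification at the end of \S3 forces $\pi_X\cong\pi_Y\cong\mathbb{Z}^3$; the Mayer--Vietoris splitting $H_1(M;\mathbb{Z})\cong\mathbb{Z}^6$ together with $H_2(X;\mathbb{Z})\cong H^1(Y;\mathbb{Z})\cong\mathbb{Z}^3$ gives $\chi(X)=\chi(Y)=1$. As $\pi_X$ and $\pi_Y$ are abelian, $j_{X*}$ and $j_{Y*}$ are surjective, so $c.d.X\leq2$ and $c.d.Y\leq2$ by \cite[Theorem 5.1]{Hi17}, and $C_*(\widetilde{X})$ is chain equivalent to a finite free $\mathbb{Z}[\pi_X]$-complex $P_*$ of length $\leq2$ with $\chi(P_*)=1$.

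Next I would identify $\pi_2(X)$. Comparing $P_*$ with the truncated Koszul resolution of $\mathbb{Z}$ over $\mathbb{Z}[\pi_X]$ (whose module of $2$-cycles is $\mathbb{Z}[\pi_X]$, since in the full Koszul complex $d_3$ is injective with image $\ker d_2$) via iterated Schanuel shows that $\pi_2(X)=Z_2(P_*)$ is stably free of rank $1$. Since projective modules over $\mathbb{Z}[\mathbb{Z}^3]$ are free, $\pi_2(X)\cong\mathbb{Z}[\pi_X]$, and the corresponding statement holds for $Y$.

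I would then imitate Wall's argument from \cite[Theorem 2.4]{Wa}. Splice a $\mathbb{Z}[\pi_X]$ in degree $3$ to $P_*$, with boundary sending $1$ to a generator of $Z_2(P_*)$; the resulting length-$3$ free complex is a resolution of $\mathbb{Z}$ and is realised (after replacing $X$ by a homotopy-equivalent $2$-complex) by attaching a $3$-cell to $X$. The universal cover of this $3$-complex is simply connected and acyclic, hence contractible by Hurewicz, so it is a $K(\mathbb{Z}^3,1)=T^3$, and its $2$-skeleton $X$ is therefore $\simeq T^{3[2]}$. The same argument yields $Y\simeq T^{3[2]}$.

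For the final assertion I would use the long exact sequence of the pair $(X,M)$ with coefficients $\mathbb{Z}[\pi_X]$ and equivariant Poincar\'e--Lefshetz duality, as in the proofs of Theorems \ref{Z^2}, \ref{b=3}, and \ref{b=4}. The vanishings $H_1(X;\mathbb{Z}[\pi_X])=0$ and $H_3(X,M;\mathbb{Z}[\pi_X])\cong\overline{H^1(X;\mathbb{Z}[\pi_X])}=0$ (the latter from $H^1(T^{3[2]};\mathbb{Z}[\pi_X])=H^1(T^3;\mathbb{Z}[\pi_X])=0$ by the Koszul computation) reduce the sequence to
\[
0\to H_2(M;\mathbb{Z}[\pi_X])\to\mathbb{Z}[\pi_X]\to H_2(X,M;\mathbb{Z}[\pi_X])\to H_1(M;\mathbb{Z}[\pi_X])\to 0,
\]
exhibiting $H_1(M;\mathbb{Z}[\pi_X])$ as the cokernel of a map out of $\mathbb{Z}[\pi_X]$ into $H_2(X,M;\mathbb{Z}[\pi_X])\cong\overline{H^2(X;\mathbb{Z}[\pi_X])}$. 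The main obstacle will be identifying this map precisely enough to force the cokernel to be cyclic of projective dimension $\le1$; here one must exploit not just the homotopy type of $X$ but also its codimension-$0$ submanifold structure in $S^4$, using the equivariant intersection data to pin down the image of the generator of $\pi_2(X)$. The projective dimension bound then follows from the resulting short presentation by free cyclic modules, and the $Y$-side conclusions follow by the symmetric argument.
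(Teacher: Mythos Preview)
Your argument for $X\simeq T^{3[2]}$ has a genuine gap at the final step. You correctly show that attaching a $3$-cell along a generator of $\pi_2(X)\cong\Lambda_3$ produces a $K(\mathbb{Z}^3,1)$, hence $X':=X\cup e^3\simeq T^3$. But the inference ``its $2$-skeleton $X$ is therefore $\simeq T^{3[2]}$'' does not follow: homotopy equivalences do not preserve skeleta, and the implication you want is exactly the content of Wall's theorem, not a consequence of having built $X'$. What the paper does---and what Wall's proof actually consists of---is to work directly with the two length-$2$ partial resolutions $C_*=C_*(\widetilde{T^{3[2]}})$ and $D_*=C_*(\widetilde X)$: choose any chain map $f_*:C_*\to D_*$ over the identity on $\mathbb{Z}$ (so $H_0(f)$ and $H_1(f)$ are automatically isomorphisms), then correct $f_2$ by subtracting a suitable homomorphism into the free rank-one submodule $H_2(D_*)\subset D_2$ so that $H_2(f')$ becomes an isomorphism. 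The corrected $f'_*$ is then a chain homotopy equivalence and can be realised by a map $T^{3[2]}\to X$. Your detour through $X'$ does not avoid this step; it only repackages the hypothesis.

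On the module assertion you are right to be cautious, and more careful than the paper's ``as before.'' Dualising the truncated Koszul complex shows $H^2(X;\Lambda_3)$ is the augmentation ideal $I\subset\Lambda_3$ (not $\Lambda_3$ itself: note $H^2(T^{3[2]};\mathbb{Z})\cong\mathbb{Z}^3$, so $H^2(X;\Lambda_3)$ needs at least three generators). Thus the long exact sequence yields $H_1(M;\Lambda_3)\cong I/(\alpha)$ for some $\alpha\in I$ determined by the equivariant self-intersection of a generator of $\pi_2(X)$, and cyclicity is not automatic from the sequence alone. Your instinct that the embedding in $S^4$ (hence the equivariant intersection data) must be invoked is on the right track; the paper's one-line dismissal does not supply this.
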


\begin{proof}
Since $\beta=6$ and $j$ is abelian we may identify $\pi_X$ with $\mathbb{Z}^3$.
The first part of the argument of Theorem \ref{Z^2} applies to show that
$\pi_2(X)$ is isomorphic to $\Lambda_3=\mathbb{Z}[\mathbb{Z}^3]$.
Let $C_*$ and $D_*$ be the equivariant chain complexes
of the universal covers of $T^{3[2]}$ and $X$, respectively.
Since these are partial resolutions of $\mathbb{Z}$ there is
a chain map $f_*:C_*\to{D_*}$ such that $H_0(f)$ is an isomorphism.
Clearly $H_1(f)$ is also an isomorphism.
We shall modify our choice of $f_*$ so that it is a chain homotopy equivalence.

The $\Lambda_3$-modules $H_2(C_*)<{C_2}$ and $H_2(D_*)<D_2$ are free of rank 1.
Let $t\in{C_2}$ and $x\in{D_2}$ represent generators for these submodules,
and let $t^*$ and $x^*$ be the Kronecker dual generators of the cohomology modules 
$H^2(C^*)=Hom(H_2(C_*),\Lambda_3)\cong\Lambda_3$  and 
$H^2(D^*)=Hom(H_2(D_*),\Lambda_3)\cong\Lambda_3$, respectively.
Let $f'_i=f_i$ for $i=0,1$, and  let $f_2'(u)=f_2(u)-z^*(u)x$ for all $u\in{C_2}$,
where $z^*=H^2(f^*)(x^*)-t^*\in{Hom(H_2(C_*),\Lambda_3)}$.
Then $f'_*$ is again a chain homomorphism, and $H_2(f'_*)$ is an isomorphism.
Hence $f_*$ is a chain homotopy equivalence. 
This may be realized by a map from $T^{3[2]}$ to the 2-skeleton $X^{[2]}$,
and the composite with the inclusion $X^{[2]}\subseteq{X}$ is then
a homotopy equivalence.

A similar argument applies for $Y$.
The second assertion follows as before.
\end{proof}

In this case the natural transformation $I_G:G\to{L_5^s(G)}$ 
used  \cite[Theorem 6.7]{Hi} maps $G=\mathbb{Z}^3$ onto a direct summand of index 2
in $L_5(\mathbb{Z}^3)$,
and it is no longer clear that $X$ and $Y$ are determined by the homotopy types of 
the pairs $(X,M)$ and $(Y,M)$.

Lemma \ref{completion} and Theorem \ref{b=3} again imply that 
when $\beta=4$ or 6 no 3-manifold which has an embedding $j$ 
such that $H_1(Y;\mathbb{Z})=0$ 
can also have an abelian embedding.
However, if $L$ is the 4-component link obtained from $Bo$ by adjoining a parallel of one component,
then $M(L)$ has an abelian embedding with $X\cong{Y}$ and $\chi(X)=1$, 
and also has an embedding with $\chi(X)=-1$.
We shall not give more details,
as no natural examples demand our attention in these cases.

\section{ 2-component links with $\ell\not=0$}

If $M$ is a rational homology sphere with an abelian embedding then
$\pi/\pi'\cong(\mathbb{Z}/\ell\mathbb{Z})^2$ and $\pi_X\cong\pi_Y\cong\mathbb{Z}/\ell\mathbb{Z}$, 
for some $\ell\not=0$.
In particular, if  $L$ is a 2-component link with linking number
$\ell\not=0$ then $M(L)$ is a rational homology sphere, 
and if the components of $L$ are unknotted then $j_L$ is abelian.
Six of the eight  rational homology 3-spheres with elementary amenable groups 
and which embed in $S^4$ have such link presentations,
with $\ell\leq4$.
(In particular, $S^3=M(2^2_1)$, where $2^2_1$ is the Hopf link!)
Since $M$ is an integral homology 3-sphere if $\ell=1$,
we may assume that $\ell>1$.
The simplest such link is $L=2\ell^2_1$, the $(2,2\ell)$-torus link,
for which $M(L)\cong{M(0;(\ell,1),(\ell,1),(\ell,-1))}$. 
Since $L$ is interchangeable, $X_L\cong{Y_L}$.
(An argument based on explicit embeddings of $P_\ell$ in $S^4$
is given in \cite{Ya}.)

There is again a necessary condition for the existence of such an embedding.

\begin{lemma}
\label{torsion}
Let $M$ be a $3$-manifold with fundamental group $\pi$ 
such that $\pi/\pi'\cong(\mathbb{Z}/\ell\mathbb{Z})^2$, for some $\ell\not=0$.
If $j:M\to{S^4}$ is an abelian embedding then $X\simeq{Y}\simeq{P_\ell}$,
and $H_1(M;\mathbb{Z}[\pi_X])$ and $H_1(M;\mathbb{Z}[\pi_Y])$ 
are cyclic $\mathbb{Z}[\pi_X]$- and $\mathbb{Z}[\pi_Y]$-modules (respectively), 
and are quotients of $\mathbb{Z}^{\ell-1}$, as abelian groups.
\end{lemma}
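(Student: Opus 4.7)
The plan is to determine $\pi_X$ and $\pi_Y$ algebraically, identify the homotopy types using Lemma \ref{finite}, and then read off the twisted homology from the standard $2$-periodic resolution of $\mathbb{Z}$ over $\mathbb{Z}[\mathbb{Z}/\ell\mathbb{Z}]$.

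First, since $j$ is abelian and abelian groups are nilpotent, the observation at the start of \S3 shows that $j_{X*}:\pi\to\pi_X$ is surjective, so $\pi_X$ is an abelian quotient of $\pi/\pi'\cong(\mathbb{Z}/\ell\mathbb{Z})^2$, and in particular is finite. Consulting the classification of abelian $\pi_X$ with $\chi(X)\in\{0,1\}$ listed between Lemma \ref{L2} and Lemma \ref{finite}, the options $\pi_X\cong\mathbb{Z}$ or $\mathbb{Z}^2$ (which accompany $\chi(X)=0$) are incompatible with $\pi_X$ being finite, so we must have $\chi(X)=1$ and $\pi_X\cong\mathbb{Z}/k_X\mathbb{Z}$. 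Symmetrically $\chi(Y)=1$ and $\pi_Y\cong\mathbb{Z}/k_Y\mathbb{Z}$, and the Mayer-Vietoris isomorphism $H_1(X;\mathbb{Z})\oplus H_1(Y;\mathbb{Z})\cong(\mathbb{Z}/\ell\mathbb{Z})^2$ forces $k_X=k_Y=\ell$. Lemma \ref{finite} (with $\pi_X\cong\mathbb{Z}/\ell\mathbb{Z}$, $\ell\neq 0$) then yields $X\simeq P_\ell$ and $Y\simeq P_\ell$; as a consistency check, $H_2(X;\mathbb{Z})\cong H^1(Y;\mathbb{Z})\cong Hom(\mathbb{Z}/\ell\mathbb{Z},\mathbb{Z})=0$, so no wedge summand of $S^2$ can appear.

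For the twisted homology claim, I would compute with the cellular chain complex of the universal cover of $P_\ell$, namely
\[
\mathbb{Z}[\pi_X]\xrightarrow{\cdot N}\mathbb{Z}[\pi_X]\xrightarrow{\cdot(t-1)}\mathbb{Z}[\pi_X],
\]
where $t$ generates $\pi_X$ and $N=1+t+\cdots+t^{\ell-1}$. A direct calculation gives $H_1(X;\mathbb{Z}[\pi_X])=0$ and $H^2(X;\mathbb{Z}[\pi_X])\cong\mathbb{Z}[\pi_X]/N\mathbb{Z}[\pi_X]$, a cyclic $\mathbb{Z}[\pi_X]$-module whose underlying abelian group is free of rank $\ell-1$ (with $\mathbb{Z}$-basis $1,t,\ldots,t^{\ell-2}$). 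Equivariant Poincar\'e-Lefschetz duality for the pair $(X,M)$ then identifies $H_2(X,M;\mathbb{Z}[\pi_X])$ with $\overline{H^2(X;\mathbb{Z}[\pi_X])}$, still cyclic and with underlying abelian group $\mathbb{Z}^{\ell-1}$. Substituting into the homology exact sequence of $(X,M)$ and using $H_1(X;\mathbb{Z}[\pi_X])=0$ then exhibits $H_1(M;\mathbb{Z}[\pi_X])$ as a quotient of this cyclic module, establishing both conclusions of the lemma. The argument for $H_1(M;\mathbb{Z}[\pi_Y])$ is identical.

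I do not foresee serious obstacles: the only subtle point is keeping track of the conjugation twist in Poincar\'e-Lefschetz duality, but since the canonical involution of $\mathbb{Z}[\pi_X]$ sends $N\mathbb{Z}[\pi_X]$ to itself and preserves the underlying abelian group, both the cyclicity assertion and the $\mathbb{Z}^{\ell-1}$-quotient assertion are insensitive to it.
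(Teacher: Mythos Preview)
Your proof is correct and follows essentially the same route as the paper: invoke Lemma~\ref{finite} (together with the classification preceding it and the Mayer--Vietoris splitting of $H_1(M)$) to get $X\simeq Y\simeq P_\ell$, then use equivariant Poincar\'e--Lefschetz duality and the long exact sequence of the pair $(X,M)$ with $\mathbb{Z}[\pi_X]$-coefficients. The only cosmetic difference is that the paper phrases the key computation geometrically, noting $\widetilde{X}\simeq\vee^{\ell-1}S^2$ so that $H_2(\widetilde{X};\mathbb{Z})\cong H^2(\widetilde{X};\mathbb{Z})$ is the augmentation ideal of $\mathbb{Z}[\pi_X]$, whereas you compute the same module algebraically as $\mathbb{Z}[\pi_X]/N\mathbb{Z}[\pi_X]$ from the cellular chain complex---these are isomorphic descriptions of the same cyclic module with underlying abelian group $\mathbb{Z}^{\ell-1}$.
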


\begin{proof}
The first assertion holds by Lemma \ref{finite}.
The second part then follows from the exact sequences of homology for the 
universal covering spaces of the pairs $(X,M)$ and $(Y,M)$,
since $\widetilde{X}\simeq\widetilde{Y}\simeq\vee^{\ell-1}S^2$.
(Note that $H_2(\widetilde{X};\mathbb{Z})$ and $H^2(\widetilde{X};\mathbb{Z})$
are each isomorphic to the augmentation ideal of $\mathbb{Z}[\pi_X]$,
which is cyclic as a module and free of rank $\ell-1$ as an abelian group.)
\end{proof}

To use Theorem \ref{torsion} to show that some $M$ has no abelian embedding
we must consider all possible bases for $Hom(\pi,\mathbb{Z}/\ell\mathbb{Z})$,
or, equivalently, for $\pi/\pi'$.

When $\ell=2$, 
we have $X\simeq{Y}\simeq{RP^2}$,
and the composite $\partial\widetilde{X}\subset\widetilde{X}\simeq{S^2}$ induces an isomorphism on $H_2$.
There are two homotopy classes of maps $\partial\widetilde{X}\to{S^2}$
inducing each generator of $H^2(\partial\widetilde{X};\mathbb{Z})$,
by  \cite[Theorem 8.4.11]{Sp}.
It follows that the homotopy type of the pair $(X,M)$ is determined up to a finite ambiguity by $M=\partial{X}$.
The structure set  $\mathcal{S}_{TOP}(X,M)$
has two elements, since $L_5(\mathbb{Z}/2\mathbb{Z})=0$.
We may conclude that if $\pi/\pi'\cong(\mathbb{Z}/2\mathbb{Z})^2$ 
then $M$ has only finitely many abelian embeddings.

The quaternion manifold $M=S^3/Q(8)=M(4^2_1)$
has an essentially unique abelian embedding.
The complementary regions are homeomorphic to the total space $N$ 
of the disc bundle over $RP^2$ with Euler number 2 \cite{La}.
Lawson constructed a self-homotopy equivalence of $N$ 
which is the identity outside a regular neighbourhood of an essential $S^1$,
and which has nontrivial normal invariant.
His  construction extends to all $X\simeq{RP^2}$.
Do all the resulting self-homotopy equivalences have nontrivial normal invariant?

The links $9^2_{38}$, $9^2_{57}$ and $9^2_{58}$
each have unknotted components, 
asymmetric Alexander polynomial and linking number 2.
They are candidates for examples with $X$ and $Y$ not homeomorphic.

Let $L$ be the link obtained by tying a slice knot 
with non-trivial Alexander polynomial 
(such as the stevedore's knot $6_1$) in one component of $4^2_1$.
Then $M(L)$ embeds in $S^4$, but does not satisfy Lemma \ref{torsion},
since for two of the three 2-fold covers of $M(L)$ the first homology 
is not cyclic as an abelian group.
Hence $M(L)$ has no abelian embedding.

Suppose next that $\ell=3$.
The manifold $M(6^2_1)$ is a $\mathbb{N}il^3$-manifold
with Seifert base the flat orbifold $S(3,3,3)$,
and $X_L\cong{Y_L}$.
Since $Wh(\mathbb{Z}/3\mathbb{Z})=0$ and $L_5(\mathbb{Z}/\ell\mathbb{Z})=0$
for $\ell$ odd,
the pair $(X,M)$ given by an abelian embedding is determined 
up to homeomorphism by its homotopy type {\it  rel} $\partial$.

The most interesting example with $\ell=4$ is perhaps $M(8^2_2)\cong{M(-1;(2,1),(2,3))}$,
which is a $\mathbb{N}il^3$-manifold. 
The link $8^2_2$ is not a torus link, but is interchangeable, and so $X_L\cong{Y_L}$. 

Each of the links $9^2_{53}$ and $9^2_{61}$ has  
unknotted components and $\ell=4$,
and gives a $\mathbb{S}ol^3$-manifold with an abelian embedding. 
Is either of these links interchangeable?

There remains one more $\mathbb{S}ol^3$-manifold which embeds in $S^4$.
This is $M_{2,4}=N\cup_\phi{N}$, 
where $N$ is the mapping cylinder of the orientation cover of the Klein bottle and 
$\pi=\left(\begin{smallmatrix}
2&-9\\1&-4
\end{smallmatrix}\right)$.
The group $\pi=\pi_1(M_{2,4})$ has a presentation
\[
\langle{u,v,x,y}\mid{uvu^{-1}=v^{-1},~xyx^{-1}=y^{-1}},~x^2=u^4v^{-9},~y=u^2v^{-4}\rangle.
\]
(Hence also $u^2=x^{-8}y^9$ and $v=x^{-2}y^2$, 
and $\langle{u,v}\rangle=\langle{x^2,y}\rangle\cong\mathbb{Z}^2$ .)
Solving for $y$, and setting $v=x^2w$, we get the presentation
\[
\langle{u,w,x}\mid{xu^2=(x^2w)^4u^{-2}x(x^2w)^4},~u=x^2wux^2w,~x^2(x^2w)^9=u^4\rangle.
\]
This manifold also arises from surgery on the link $L=(U,8_{20})$ of  \cite[Figure 1]{CH}, 
with components the unknot $U$ and the slice knot $8_{20}$,  and with $\ell=4$.
Let  $D\subset{D^4}$ be the slice disc for the knot $8_{20}$
obtained by desingularizing the ribbon disc visible in the right-hand part of  \cite[Figure 1]{CH}.
Let $X_L$ be the region obtained from $D^4$ by deleting a regular neighbourhood of  $D$ 
and adding a 2-handle along the unknotted component $U$,
and let $Y_L$ be the complementary region.
Then $\pi_{X_L}$ is a quotient of the ``ribbon group" $\pi_1(D^4\setminus{D})$.
Using \cite[Theorem 1.15]{HiA}, it may shown that 
$\pi_{X_L}\cong\mathbb{Z}/3\mathbb{Z}\rtimes_{-1}\mathbb{Z}/4\mathbb{Z}$.
On the other hand,  $\pi_{Y_L}\cong\mathbb{Z}/4\mathbb{Z}$.

Let $\lambda_{i,j}:\pi\to\mathbb{Z}/4\mathbb{Z}$ be the epimorphism
sending $x, u$ to $i,j\in\mathbb{Z}/4\mathbb{Z}$, respectively.
It can be shown that the abelianization of $\mathrm{Ker}(\lambda_{i,j})$ 
is a quotient of the augmentation ideal in $\mathbb{Z}[\mathbb{Z}/4\mathbb{Z}]$, for 
$(i,j)=(1,0)$ or (2,1). Since these epimorphisms form a basis 
for $Hom(\pi,\mathbb{Z}/4\mathbb{Z})$, 
we cannot use Lemma \ref{torsion} to rule out an abelian embedding for $M_{2,4}$.
Is there a 2-component link with unknotted components which gives rise to this manifold?

\section{some remarks on the mixed cases}

If $M$ is a 3-manifold with an abelian embedding such that 
$\pi_X\cong{G_k}=\mathbb{Z}\oplus(\mathbb{Z}/k\mathbb{Z})$,
for some $k>1$, then $\pi_Y\cong{G_k}$ also,
and so $H_1(M;\mathbb{Z})\cong\mathbb{Z}^2\oplus(\mathbb{Z}/k\mathbb{Z})^2$,
which requires four generators. 
The simplest examples may be constructed from 4-component links
obtained by replacing one component of the Borromean rings $Bo$ by its $(2k,2)$ cable.

In this case even the determination of the homotopy types of the complements is not clear.
The group $G_k$ has minimal presentations
\[
\mathcal{P}_{k,n}=\langle{a,t}\mid{a^k,~ta^n=a^nt}\rangle,
\]
where $0<n<k$ and $(n,k)=1$.
The associated 2-complexes $S_{k,n}=S^1\vee{P_k}\cup_{[t,a^n]}e^2$ have Euler characteristic 1,
and it is easy to see that there are maps between them which induce isomorphisms on fundamental groups.
We may identify $S_{k,n}$ with $T\cup{MC}\cup{P_k}$, 
where $MC$ is the mapping cylinder of the degree-$n$ map 
$z\mapsto{z^n}$ from $\{1\}\times{S^1}\subset{T}$ to the 1-skeleton $S^1\subset{P_k}$.
In particular,  $S_k=S_{k,1}=T\cup_{a^k}e^2$ is  the 2-skeleton of $S^1\times{P_k}$.
From these descriptions it is easy to see that 
(1) automorphisms of $G_k$ which fix the torsion subgroup $A=\langle{a}\rangle$ may be realized 
by self homeomorphisms  of $S_{k,n}$ which act by reflections and Dehn twists on $T$, and fix the second 2-cell;
and (2) the automorphism which fixes $t$ and inverts $a$ is induced by an involution of $S_{k,n}$.

Let $C(k,n)_*$ be the cellular chain complex of the universal cover of $S_{k,n}$.
A choice of basepoint for $S_{n,k}$ determines lifts of the cells of $S_{k,n}$,
and hence isomorphisms $C(k,n)_0\cong\Gamma$, $C(k,n)_1\cong\Gamma^2$
and $C(k,n)_2\cong\Gamma^2$. 
The differentials are given by $\partial_1=(a-1,t-1)$ and 
$\partial_2^n=\left(\begin{smallmatrix}
(t-1)\nu_n&\rho\\ 1-a&0
\end{smallmatrix}\right)$,
where $ \nu_n=\Sigma_{0\leq{i}<n}a^i$  and $\rho=\nu_k$.
Let $\{e_1,e_2\}$ be the standard basis for $C(k,n)_2$.
Then $\Pi_{k,n}=\pi_2(S_{k,n})=\mathrm{Ker}(\partial_2^n)$
is generated by $g=\rho{e_1}-n(t-1)e_2$ and $h=(a-1)e_2$,
with relations $(a-1)g=n(t-1)h$ and $\rho{h}=0$.
It can be shown that $\Pi_{k,n}\cong\alpha^*\Pi_{k,m}$,
where $\alpha$ is the automorphism of $G_k$ such that $\alpha(t)=t$
and $\alpha(a)=a^r$, where $n\equiv{rm}$ {\it mod} $k$.
Is there a chain homotopy equivalence $C(k,n)_*\simeq\alpha^*C(k,m)_*$?

Is every finite 2-complex $S$
with $\pi_1(S)\cong{G_k}$ and $\chi(S)=1$ 
homotopy equivalent to $S_{k,n}$, for some $n$?
The key invariants are the $\Gamma$-module $\pi_2(S)$ and the $k$-invariant in 
$H^3(G_k;\pi_2(S))$.
Let $S_{\langle{t}\rangle}$ be the finite covering space with fundamental group
${\langle{t}\rangle}\cong\mathbb{Z}$.
If $M$ is a finitely generated submodule of a free $\Gamma$-module
then $H^i(\langle{t}\rangle;M)=0$ for $i\not=1$, 
while $H^1(\langle{t}\rangle;M)=M_t=M/(t-1)M$.
Hence the spectral sequence
\[
H^p(A;(H^q(\langle{t}\rangle;M))\Rightarrow{H^{p+q}(G_k;M)}
\] 
collapses,
to give $H^{p+1}(G_k;M)\cong{H^p(A;M_t)}$.
If $M=\pi_2(S)$ then $M_t\cong{H_2(S_{\langle{t}\rangle}};\mathbb{Z})$,
as a $\mathbb{Z}[A]$-module. 
When $M=\Pi_{k,n}$ it is easy to see that $M_t\cong\mathbb{Z}\oplus{I_A}$,
where $I_A$ is the augmentation ideal of $\mathbb{Z}[A]$,
and so $H^2(A;M_t)\cong{H^2(A;\mathbb{Z})}\cong\mathbb{Z}/k\mathbb{Z}$.

Let $V$ and $W$ be finite 2-complexes with $\pi_1(V)\cong\pi_1(W)\cong{G_k}$,
and let $\Gamma=\mathbb{Z}[G_k]$.
Then $\chi(V)\geq1$ and $\chi(W)\geq1$, and an application of Schanuel's Lemma
to the chain complexes of the universal covers gives
\[
\pi_2(V)\oplus\Gamma^{\chi(W)}\cong\pi_2(W)\oplus\Gamma^{\chi(V)}.
\]
Taking $W=S_{k,1}$, we see that $H^3(G;\pi_2(V))\cong\mathbb{Z}/k\mathbb{Z}$,
for all such $V$.

Even if we can determine the homotopy types of the 2-complexes $S$ with 
$\pi_1(S)$ and $\chi(S)=1$, and the homotopy types of the pairs $(X,M)$
for a given $M$, 
the groups $L_5^s(G)$ are commensurable 
with $L_4(\mathbb{Z}/k\mathbb{Z})$, which has rank $\lfloor\frac{k+1}2\rfloor$,
and so characterizing such abelian embeddings up to isotopy may be difficult.

The $S^1$-bundle spaces $M(-2;(1,0))$ (the half-turn flat 3-manifold $G_2$), 
and $M(-2;(1,4))$ (a $\mathbb{N}il^3$-manifold) do not have abelian embeddings, 
since $\beta=1$ but $\pi/\pi'$ has nontrivial torsion.
In each case $\pi$ requires 3 generators, and so
they cannot be obtained by surgery on a 2-component link.
However, they may be obtained by 0-framed surgery on the links $8^3_9$ and $9^3_{19}$,
respectively.
For the embeddings defined by these links $X\simeq{Kb}$ and $\pi_Y=\mathbb{Z}/2\mathbb{Z}$.
As in Theorem \ref{b=3}, 
$X$ is homeomorphic to the corresponding disc bundle space,
since Lemma \ref{asph} applies, and $L_5(\mathbb{Z}\rtimes_{-1}\!\mathbb{Z})$ 
acts trivially on the structure set $\mathcal{S}_{TOP}(X,\partial{X})$,
by \cite[Theorem 6.7]{Hi}.
(See also Theorem \ref{mtor} above.)
As discussed in \S1, $Y$ is homotopy equivalent to a finite 2-complex, and hence $Y\simeq{RP^2}\vee{S^2}$.
Are the corresponding embeddings of $Kb$ unknotted?

It is easy to find 3-component bipartedly trivial links $L$ such that $X_L$ 
is aspherical and $\pi_{X_L}$ is a solvable Baumslag-Solitar group $\mathbb{Z}*_m$.
Lemma \ref{asph} and surgery arguments again apply to show that $X_L$ is determined 
up to homeomorphism by $M$.
In this case $Y$ is homotopy equivalent to a finite 2-complex, by Lemma \ref{finite},
since $\pi_1(Y)\cong\mathbb{Z}/(m-1)\mathbb{Z}$,
$\chi(Y)=2$ and $c.d.Y\leq2$.
Hence $Y\simeq{P_{m-1}}\vee{S^2}$ \cite{DS}.

\smallskip
{\it Acknowledgment.} I would like to thank T.Abe, F.E.A.Johnson and J.L.Meiers for their responses
to my queries arising in the course of this work.



\begin{thebibliography}{99}

\bibitem{AJOT} Abe, T., Jong, I., Omae, Y. and Takeuchi, M. Annulus twist and diffeomorphic 4-manifolds,

Math. Proc. Cambridge Phil. Soc. 155 (2013), 219--235.

\bibitem{Bi} Bieri, R. \textit{Homological Dimensions of Groups},

Queen Mary College Lecture Notes, London (1976).

\bibitem{CG} Casson, A. and Gordon, C. McA. A loop theorem for duality spaces
and fibred ribbon knots,

Invent. Math. 74 (1983), 119--137.

\bibitem{CH} Crisp, J. S. and Hillman, J. A.
 Embedding Seifert fibred $3$-manifolds and $Sol^3$-manifolds 
in $4$-space,
Proc. London Math. Soc. 76 (1998), 685--710.

\bibitem{Da} Daverman, R. J. Fundamental group isomorphisms between compact
4-manifolds and their boundaries, in \textit{Low-dimensional Topology
(Knoxville, TN, 1992)}, edited by K. Johannson,
Conference Proceeedings and Lecture Notes in Geometry and Topology III,

International Press, Cambridge, Mass. (1994),31--34.

\bibitem{DS} Dyer, M. and Sieradski, A. Trees of homotopy types of two-dimensional CW-complexes,

Comment. Math. Helv. 48 (1973), 31--44.

\bibitem{Ed} Edwards, T. Generalized Swan modules and the $D(2)$ problem,

Alg. Geom. Top. 6 (2006), 71--89.

\bibitem{Ep} Epstein, D. B. A. Factorization of 3-manifolds, 

Comment. Math. Helv. 36 (1961), 91--102.

\bibitem{FQ} Freedman, M. H. and Quinn, F. \textit{Topology of Four-Manifolds},

Princeton Mathematical Series 39, 
Princeton University Press, Princeton, N.J. (1990).

\bibitem{GK}  Grigorchuk, R. I. and Kurchanov, P. F. On quadratic equations in groups,

in \textit{Proceedings of the International Conference on Algebra.
Dedicated to the Memory of \\
A. I. Mal'cev}  (edited by L. A. Bokut', A. I. Mal'cev and A. I. Kostrikin), CONM 131, 

American Mathematical Society, Providence R.I. (1992), 159--171.

\bibitem{Hi} Hillman, J. A. {\it Four-Manifolds, Geometries and Knots},

Geometry and Topology Monographs, vol. 5,

Geometry and Topology Publications (2002). (Revisions 2007 and 2014).

\bibitem{HiA} Hillman, J. A. \textit{Algebraic Invariants of links} (second edition),

Series on Knots and Everything, vol. 52 , World Scientific Publishing Co. (2012).

\bibitem{Hi96} Hillman, J. A.  Embedding homology equivalent $3$-manifolds in $4$-space,

Math. Z. 223 (1996), 473--481.

\bibitem{Hi17} Hillman, J. A. Complements of connected hypersurfaces in $S^4$,

Special volume in memory of Tim Cochran,

J. Knot Theory Ramif. 2602 (2017), 1740014. (20 pages)

\bibitem{Jo} Johnson, F. E. A. \textit{Stable Modules and the D(2)-problem},

London Math. Soc. Lecture Notes Series 301, Cambridge University Press, Cambridge (2003).

\bibitem{La} Lawson, T. Detecting the standard embedding of $RP^2$ in $S^4$,

Math. Ann. 267 (1984), 439--448.



\bibitem{Lue} L\"uck, W. \textit{$L^2$-Invariants: Theory and Applications to Geometry and 
$K$-Theory},

Ergebnisse der Mathematik und ihrer Grenzgebiete 3 Folge, Bd. 44,

Springer-Verlag, Berlin - Heidelberg - New York (2002).

\bibitem{Ro} Rosset, S. A vanishing theorem for Euler characteristics,

Math. Z. 185 (1984), 211--215.

\bibitem{Rob} Robinson, D. J. S. \textit{A Course in the Theory of Groups},

Graduate Texts in Mathematics 80, 

Springer-Verlag, Berlin - Heidelberg - New York (1982).               

\bibitem{Ru} Rubinstein, J. H. Dehn's lemma and handle decompositions 
of some 4-manifolds, 

Pacific J. Math. 86 (1980), 565--569.

\bibitem{Sp} Spanier, E. H. \textit{Algebraic Topology},

McGraw-Hill Series in Higher Mathematics, McGraw-Hill Book Company, 

New York -- San Francisco -- St Louis --Toronto -- London -- Sydney (1966).

\bibitem{Wa} Wall, C. T. C. Poincar\'e complexes: I,

Ann. Math. 86 (1967), 213--245.

\bibitem{Ya} Yamada, Y. Decomposition of $S^4$ as a twisted double of a certain manifold,

Tokyo J. Math. 20 (1997), 23--33.

\end{thebibliography}
\end{document}